\documentclass[reqno]{amsart}

\usepackage{amsfonts,color,newclude}
\usepackage{amsthm}
\usepackage{amssymb,transparent}
\usepackage{amsmath}
\usepackage{amscd}
\usepackage{thm-restate}

 % Here because mathabx redefines \dot when it's an operator to render as a 9, which is super weird.

\usepackage{mathabx}
\usepackage{graphicx}
\usepackage{subfiles}
\usepackage{tikz}
\usepackage{tikz-cd}
\usetikzlibrary{calc}
\usepackage{MnSymbol}
\usepackage[colorlinks=true, pdfstartview=FitV, linkcolor=blue, citecolor=blue, urlcolor=blue]{hyperref}
%Reset Margins
\addtolength{\evensidemargin}{-.5in}
\usepackage[normalem]{ulem} %strikeout \sout command
\usepackage{hyperref}

\usepackage[nameinlink,capitalize]{cleveref}

\addtolength{\oddsidemargin}{-.5in}
\addtolength{\textwidth}{1in}

%set line spacing
\linespread{1.3}

%Define commands
\renewcommand{\tilde}{\widetilde}
\newcommand{\mathsym}[1]{{}}

\newcommand{\naturals}{\mathbb{N}}

\newcommand{\integers}{\mathbb{Z}}

\newcommand{\inv}{^{-1}}

\newcommand{\Z}{\integers}

\newcommand{\oskel}{^{(1)}}

\newcommand{\Stab}{\operatorname{Stab }}

%%%%%%%%%%%%%%%%%

\def\mc {\mathcal}

\def\CAT {\ensuremath{\operatorname{CAT}}}

\newcommand{\boundary}{\partial}

\newcommand{\hyp}{\mathfrak{h}}		%hyperplane notation
	%Hausdorff distance

% Commands to make dotted subgroup symbols be property spaced:

%%%%%%%%%%%%%%%%%%%%%%%%

%Quotients

\newcommand{\leftQ}[2]{\left.\raisebox{-.2em}{$#2$}\middle\backslash\raisebox{.2em}{$#1$}\right.}
%

%Define environments
\newcounter{probnum}
\setcounter{probnum}{1}

\newtheorem{theorem}{Theorem}[section]

\newtheorem{proposition}[theorem]{Proposition}

\newtheorem{lemma}[theorem]{Lemma}
\newtheorem{example}[theorem]{Example}

\theoremstyle{definition}
\newtheorem{definition}[theorem]{Definition}

%NEW MACROS

\renewcommand{\tilde}{\widetilde}

\renewcommand{\epsilon}{\varepsilon}
\newcommand{\walls}{\mathcal{H}_{\mathrm{0}}}

%ANNOTATION MACROS

\title{On the boundary criterion for relative cubulation: multi-ended parabolics}

%\includeonly{BasicFacts/BasicFacts}
%\includeonly{CanonicalCompletion/CanonicalCompletion}
%\includeonly{ConvexRelQC/ConvexRelQC}
%\includeonly{BoundaryLabeling/BoundaryLabeling}
%\includeonly{WallConstruction/WallConstruction}
%

\author{Eduard Einstein}
\author{Suraj Krishna M S}
\author{Thomas Ng}

\begin{document}
	\maketitle
	%\today
	
	\setcounter{tocdepth}{1}
	%\tableofcontents
	
	\begin{abstract}
		In this note, we clarify that
		the boundary criterion for relative cubulation of the first author and Groves works even when the peripheral subgroups are not one-ended.
		Specifically, if the boundary criterion is satisfied for a relatively hyperbolic group, we show that, up to taking a \emph{refined peripheral structure}, the group
		admits a relatively geometric action on a CAT(0) cube complex.  
		We anticipate that this refinement will be useful for constructing new relative cubulations in a variety of settings.
	\end{abstract}
	% \suraj{Made changes to the abstract: 1) to more accurately reflect that we are not doing anything new: we are taking the same construction as in Einstein--Groves and showing that it works more generally (as accurately stated in their own statement), and 2) to say that we are not giving an explicit construction but an existential one: the refinement is done whenever the parabolic acts not elliptically and this is not explicit...}
	% Looks good to me! -TN

	\section{Introduction}
	
	In \cite{Sageev95}, Sageev introduced a powerful technique for constructing a CAT(0) cube complex from a given family of codimension-1 subgroups of a group $G$. 
	Since then, the study of groups acting properly and cocompactly on hyperbolic CAT(0) cube complexes and the program studying virtually special cube complexes initiated by Haglund and Wise \cite{HW08} have produced a number of important results in geometric group theory, including Agol's resolution of the Virtual Haken and Virtual Fibering Conjectures \cite{AgolVirtualHaken}.
	
	% from a system of walls associated to codimension-$1$ subgroups of a group $G$. 
	
	There is substantial interest in using cubical machinery to produce more general results, specifically for relatively hyperbolic groups, see for example \cite{BergeronWise,HruskaWise14,PiotrWise18,SW2015,WiseManuscript}. 
	In all of these works, the focus is on studying/obtaining either proper and cocompact, or proper but not cocompact actions of relatively hyperbolic groups on CAT(0) cube complexes.
	% In \cite{Sageev95}, Sageev introduced a powerful technique for constructing a CAT(0) cube complex given a family of codimension-1 subgroups of a group $G$. 
	In \cite{RelCannon}, 
	the first author and Groves introduced relatively geometric actions, a type of cocompact  but improper action of a 
	relatively hyperbolic group
	on a CAT(0) cube complex.
	In subsequent work \cite{RelGeom}, they prove that groups admitting relatively geometric actions have properties similar to those of hyperbolic groups that act geometrically on CAT(0) cube complexes. For example, a relatively hyperbolic group with residually finite parabolics is residually finite if it acts relatively geometrically on a CAT(0) cube complex \cite[Corollary 1.7]{RelGeom}. 
	
	\begin{definition}
		\label{def:relGeom}
		Let $(G,\mc{P})$ be a relatively hyperbolic pair, and let $\tilde{X}$ be a CAT(0) cube complex. A cellular action of $G$ on $\tilde{X}$ is \emph{relatively geometric} (with respect to $\mc{P}$) if
		\begin{enumerate}
			\item $\leftQ{\tilde{X}}{G}$ is compact,
			\item every $P\in\mc{P}$ acts elliptically,
			\item every infinite cell stabilizer is conjugate to a finite index subgroup of some $P\in\mc{P}$. 
		\end{enumerate}
	\end{definition}
	
	Bergeron and Wise leveraged Sageev's construction and
	developed a boundary criterion 
	% for constructing S: modifying slightly
	to obtain
	proper and cocompact actions of hyperbolic groups on CAT(0) cube complexes \cite{BergeronWise}.  
	The first author and Groves adapted the machinery of \cite{BergeronWise} to produce a relatively geometric version of the boundary criterion, \cite[Theorem 2.6]{RelCannon}, for relatively hyperbolic groups with one-ended peripherals. In this note, we explain how to extend \cite[Theorem 2.6]{RelCannon} to a relatively geometric cubulation criterion for arbitrary peripherals. However, we may need to alter the peripheral structure in a natural way: 
	
	\begin{definition}
		Let $(G,\mc{P})$ be a relatively hyperbolic pair. If there is a collection of subgroups $Q$ such that each $Q\in\mc{Q}$ is conjugate into some $P\in \mc{P}$ and $(G,\mc{Q})$ is relatively hyperbolic, then we say that $(G,\mc{Q})$ is a \emph{refined peripheral structure} for $(G,\mc{P})$. 
	\end{definition}
	
	Properties of the peripheral structure $\mc{P}$ that can be deduced from $\mc{Q}$ have been studied by Yang \cite{WenyuanYang2014}, who calls $\mc{P}$ a parabolically extended structure for $\mc{Q}$.
	Our main result is the following:
	
	\begin{restatable}{theorem}{mainthm}    
		\label{thm: boundary multiended}
		Let $(G, \mc P)$ be a finitely generated relatively hyperbolic group 
		% such that every element $P \in \mc{P}$ is finitely presented and (S: because a relatively hyperbolic G is finitely presented iff its peripherals are)
		that satisfies the boundary criterion in \cite{RelCannon}, that is, there exists a collection $\mc H$ of codimension-1 subgroups of $G$ satisfying the following conditions.
		\begin{enumerate}
			\item 
			\label{item:full}
			Every $H \in \mc H$ is $\mc P$-full.
			\item 
			\label{item:relQC}
			Every $H \in \mc H$ is $\mc P$-relatively quasiconvex.
			\item 
			\label{item:separation}
			(Separation)
			For every pair of distinct points $\alpha$ and $\beta$ in $\boundary_{\mc P} G$, there exists a subgroup $H \in \mc H$ such that the limit set $\Lambda H$ of $H$ separates $\alpha$ and $\beta$, i.e., $\alpha$ and $\beta$ lie in $H$-distinct components of $\boundary_{\mc P} G \setminus \Lambda H$.
		\end{enumerate}
		Then there is a finite subcollection $\mathcal{H}_0 = \{H_1, \cdots, H_n\}$ of $\mc H$ such that $G$ acts cocompactly on the dual CAT(0) cube complex $X$. If each $P\in\mc{P}$ acts elliptically on $X$, then $(G,\mc{P})$ is relatively geometric. Otherwise, there is a refinement $\mc Q$ of $\mc P$ so that $(G,\mc{Q})$ acts relatively geometrically on $X$.    
	\end{restatable}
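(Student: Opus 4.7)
The proof breaks into four stages, with the first three largely parallel to the one-ended case treated in \cite{RelCannon}. First, to extract the finite subcollection $\mc H_0 \subseteq \mc H$ and verify that the $G$-action on the dual CAT(0) cube complex $X$ is cocompact, I would invoke the argument in the proof of \cite[Theorem 2.6]{RelCannon} essentially verbatim: the separation condition for $\mc H$ on the Bowditch boundary $\boundary_{\mc P} G$, together with the $\mc P$-fullness and relative quasiconvexity of each $H \in \mc H$, drives a compactness-and-finiteness argument that extracts a finite $\mc H_0$ whose dual cube complex admits a cocompact $G$-action. This step is phrased entirely in terms of $\boundary_{\mc P} G$ and the dynamics of $G$ on it, so it is insensitive to the number of ends of peripherals.

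Next, I would analyze cell stabilizers in $X$. Each such stabilizer is a finite intersection of $G$-conjugates of elements of $\mc H_0$, hence is $\mc P$-relatively quasiconvex and $\mc P$-full, since intersections preserve both properties (cf.\ \cite{HruskaWise14}). If an infinite cell stabilizer $C$ contains a parabolic element lying in $gPg^{-1}$ for some $P \in \mc P$, then $\mc P$-fullness forces $C \cap gPg^{-1}$ to have finite index in $gPg^{-1}$; a standard argument using almost-malnormality of peripherals and the relative quasiconvexity of $C$ then shows that $C$ itself is a finite-index subgroup of $gPg^{-1}$.

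In the elliptic case, each peripheral $P$ fixes a cell and thus embeds in some cell stabilizer; by the previous paragraph this stabilizer is a finite-index extension of $P$, which by almost-malnormality collapses to $P$ itself. Conditions (2) and (3) of Definition \ref{def:relGeom} follow, completing the elliptic case. For the general case, given a peripheral $P$ that does not act elliptically on $X$, I would pass to a $P$-cocompact, $P$-invariant essential subcomplex $Y_P \subseteq X$ (which exists because $G \acts X$ is cocompact and $P$ is relatively quasiconvex in $(G, \mc P)$) and define $\mc Q_P$ to be a set of $P$-conjugacy class representatives of the infinite cell stabilizers of $P \acts Y_P$. Taking $\mc Q = \bigcup_{P \in \mc P} \mc Q_P$, viewed as subgroups of $G$, gives a candidate refined peripheral structure: each $Q \in \mc Q$ is conjugate into some $P \in \mc P$ by construction, and, also by construction, each $Q \in \mc Q$ acts elliptically on $X$.

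The main obstacle is verifying that $(G, \mc Q)$ is actually relatively hyperbolic and that the action $G \acts X$ is relatively geometric with respect to $\mc Q$. For relative hyperbolicity, I would appeal to Yang's theory of parabolically extended structures \cite{WenyuanYang2014}, which characterizes when replacing peripherals by a collection of relatively quasiconvex subgroups preserves relative hyperbolicity; concretely, one must check that $\mc Q_P$ ``covers'' $P$, in the sense that every maximal parabolic of $(G, \mc Q)$ arises, up to conjugacy, as an element of some $\mc Q_P$. The cell-stabilizer splitting of $P$ arising from $P \acts Y_P$ should deliver this, but the key technical subtlety is ensuring the splitting is rich enough to detect all parabolics while remaining peripheral (i.e., no $Q \in \mc Q_P$ is all of $P$). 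For relative geometricity with respect to $\mc Q$: cocompactness is already established, ellipticity holds by construction, and the infinite cell stabilizer condition follows by re-running the stabilizer analysis above with $\mc Q$ in place of $\mc P$, using that the cell stabilizers of $X$ whose parabolic parts come from $P$ are precisely finite-index overgroups of elements of $\mc Q_P$.
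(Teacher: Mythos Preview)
Your first stage contains a genuine gap that propagates through the rest of the argument. You assert that the \cite{RelCannon} argument ``drives a compactness-and-finiteness argument that extracts a finite $\mc H_0$ whose dual cube complex admits a cocompact $G$-action,'' and that this step is ``insensitive to the number of ends of peripherals.'' This is not so. The Bergeron--Wise argument extracts $\mc H_0$ so that every \emph{hyperbolic} element of $(G,\mc P)$ satisfies axis separation; it does not by itself give cocompactness. In \cite{RelCannon}, cocompactness is obtained from Hruska--Wise relative cocompactness \emph{plus} the fact that each $P\in\mc P$ acts elliptically, which there follows from one-endedness. The present paper says explicitly in the introduction that when peripherals are multi-ended ``we also cannot immediately conclude that $G$ acts cocompactly on $X$,'' and indeed cocompactness (\Cref{L: cocompact follows}) is only established \emph{after} the peripheral refinement guarantees elliptic parabolics. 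Your later construction of a $P$-cocompact subcomplex $Y_P\subseteq X$ then rests on this unproved cocompactness, so the argument is circular.

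There is also a substantive difference in how relative hyperbolicity of the refined structure is obtained. You gesture at Yang's extended-structure theory and acknowledge as ``the key technical subtlety'' that the cell-stabilizer decomposition of $P$ must be rich enough; you do not explain how to verify this. The paper's route is more direct and is its main technical contribution (\Cref{T: the finer structure}): one builds a \emph{new} cube complex $\tilde Y$ for $P$, dual to the finitely many $P$-orbits of walls $gH_i$ that cut $P$ essentially (these have \emph{finite} stabilizers by fullness), shows $\tilde Y$ is a quasi-tree with fine $1$-skeleton, and concludes via Bowditch's fine-graph criterion that $P$ is hyperbolic relative to the infinite vertex stabilizers of $\tilde Y$. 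Ellipticity of these stabilizers on $X$ and loxodromicity of the new hyperbolic elements are then checked by comparing skewering in $\tilde Y$ and in $X$ (\Cref{P: new refined loxo}, \Cref{P: new refined elliptics}). Only then does cocompactness of $G\acts X$ follow, and the cell-stabilizer analysis (\Cref{lem:stabilizer}, \Cref{P:stabilizers commensurable max parabolic}) finishes the proof. Your outline misses this quasi-tree/fine-graph mechanism entirely, and without it neither the relative hyperbolicity of $(P,\mc Q_P)$ nor the cocompactness of $G\acts X$ is secured.
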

	The proof of \Cref{thm: boundary multiended} follows the same outline as the proofs of the cubulation criteria in \cite{BergeronWise,RelCannon}. We find $\walls$ in the same way as \cite{BergeronWise}. In the setting of \cite[Theorem 2.6]{RelCannon}, where every $P\in\mc{P}$ is one-ended, we can immediately guarantee that each $P\in\mc{P}$ acts elliptically on the CAT(0) cube complex $X$ dual to $\walls$. However, in our setting,  $P\in\mc{P}$ can intersect a conjugate of an element of $\walls$ in a finite codimension--$1$  subgroup. This may prevent $P$ from acting elliptically on $X$. When the $P\in\mc{P}$ do not act elliptically, we also cannot immediately conclude that $G$ acts cocompactly on $X$. 
	
	To resolve these issues, we find a refined relatively hyperbolic structure where the maximal peripherals act elliptically. 
	We need to carefully refine the peripherals so that the elements that act loxodromically on $X$ are exactly the hyperbolic elements in the new refined structure for $G$. 
	If $P\in\mc{P}$ fails to act elliptically on $X$, then $P$ has a finite codimension--$1$ subgroup $F=H^g\cap P$ where $H\in\walls$ and $g\in G$. We initially expected that classical results of Dunwoody \cite{dunwoody} and Stallings \cite{StallingsEndsTF,StallingsBookYale} would induce an iterated splitting of $P$ so that the resulting vertex groups of the splitting act elliptically on $X$ and the elements of $P$ that do not lie in a vertex group of the splitting act by hyperbolic isometries on $X$. 
	However, \cite[Theorem 4.1]{dunwoody} only allows us to take an initial splitting of $P$ over a subgroup of $P$ commensurable to $F$, which gives very little information when $F$ is finite. 

Instead, we prove the following more general result that 
% applies in a more general setting and 
may be of independent interest:
%, is undoubtedly known to experts, but we couldn't find a reference in the literature. 
% Given a finitely generated group $P$ together with a collection of finite codimension--$1$ subgroups, there exists a dual cube complex with a natural relatively hyperbolic structure so that the dynamics of the relatively hyperbolic pair acting on its Bowditch boundary relates closely to the dynamics of the group acting on the dual cube complex.
\begin{restatable}{theorem}{finitecodim}  \label{T: the finer structure}
	Let $P$ be a finitely generated group and let $K_1,\ldots,K_n$ be a collection of finite codimension--$1$ subgroups of $P$. 
	Let $\tilde{Y}$ denote the dual CAT(0) cube complex. Then $\tilde{Y}$ is a quasi-tree. Moreover, $P$ is virtually free or hyperbolic relative to the collection of conjugacy representatives of the infinite vertex stabilizers of $\tilde{Y}$.  
\end{restatable}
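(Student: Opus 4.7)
My strategy is to recognize $\tilde{Y}$ as encoding a cocompact $P$-action on a tree with finite edge stabilizers, and then apply classical dichotomous results. First, Sageev's construction applied to the walls $\{gK_i : g \in P,\ 1 \le i \le n\}$ produces a CAT(0) cube complex $\tilde{Y}$ on which $P$ acts cocompactly (since there are only $n$ orbits of walls), with every hyperplane stabilizer commensurable to a conjugate of some $K_i$, hence finite.

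The central step is the quasi-tree assertion. I would use Dunwoody--Linnell accessibility over finite subgroups: since every $K_i$ has order at most $N = \max_i |K_i|$, the process of iteratively splitting $P$ over $P$-conjugates of subgroups commensurable to the $K_i$'s terminates after finitely many steps, producing a cocompact $P$-action on a simplicial tree $T$ with finite edge stabilizers. I would then show $T$ is $P$-equivariantly quasi-isometric to $\tilde{Y}$, either by matching the edge set of $T$ with the hyperplane set of $\tilde{Y}$ via the $K_i$-splittings, or by resolving the finitely many orbits of transverse hyperplanes in $\tilde{Y}$ through Dunwoody's track-resolution (which terminates by accessibility). In either case, $\tilde{Y}$ inherits the quasi-tree property from $T$.

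Having realized $\tilde{Y}$ as a quasi-tree coming from a cocompact $P$-action on a tree with finite edge stabilizers, the dichotomy follows from two classical results: if every vertex stabilizer of $T$ is finite, then $P$ is virtually free by the theorem of Karrass--Pietrowski--Solitar; otherwise, the conjugacy representatives $Q_1, \ldots, Q_m$ of the infinite vertex stabilizers of $T$ coincide with those of $\tilde{Y}$ (infinite cell stabilizers are preserved under the equivariant quasi-isometry), and Bowditch's characterization of relatively hyperbolic groups via cocompact actions on trees with finite edge stabilizers shows $(P, \{Q_1,\ldots,Q_m\})$ is relatively hyperbolic.

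The main obstacle is precisely the quasi-tree step. A priori, Sageev's construction can create unboundedly many pairwise crossing hyperplanes even when every hyperplane stabilizer is finite, so some extra input is needed to rule out such cascades. Linnell's accessibility theorem for splittings over bounded-order finite subgroups (applied here to the bound $N = \max_i|K_i|$) is the essential tool that guarantees termination of the iterated splitting procedure and enables the reduction to a simplicial tree.
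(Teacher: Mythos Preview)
Your proposal has a genuine gap, and it stems from misidentifying the main difficulty. You worry that ``Sageev's construction can create unboundedly many pairwise crossing hyperplanes even when every hyperplane stabilizer is finite,'' but that does not happen here: since each $K_i$ is finite and acts cocompactly on its associated hyperplane, every hyperplane of $\tilde{Y}$ is \emph{compact}. The paper exploits exactly this. Compact hyperplanes immediately verify Manning's bottleneck criterion, so $\tilde{Y}$ is a quasi-tree; compactness also forces finite dimension and, combined with the finitely many hyperplane orbits, gives cocompactness of the $P$-action. A short disk-diagram count then shows that $\tilde{Y}^{(1)}$ is a \emph{fine} hyperbolic graph with finite edge stabilizers and finitely many edge orbits, so Bowditch's (equivalently Hruska's RH-4) characterization yields relative hyperbolicity with respect to the infinite vertex stabilizers, or virtual freeness when there are none. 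No accessibility theorem is invoked.

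Your detour through Linnell accessibility, by contrast, does not deliver what the statement requires. Accessibility produces a \emph{maximal} splitting of $P$ over finite subgroups; its Bass--Serre tree $T$ need not be $P$-equivariantly quasi-isometric to $\tilde{Y}$, and its infinite vertex stabilizers need not coincide with those of $\tilde{Y}$. For a concrete failure, take $P = A * B * C$ with $A,B,C$ infinite and a single trivial subgroup $K_1 = \{1\}$ whose wall separates $A$ from $B*C$. Then $\tilde{Y}$ is the Bass--Serre tree of the splitting $A * (B*C)$, with infinite vertex stabilizers conjugate to $A$ or to $B*C$, while the accessibility tree $T$ has vertex stabilizers conjugate to $A$, $B$, or $C$. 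There is no $P$-equivariant quasi-isometry between them ($B*C$ is elliptic on $\tilde{Y}$ but has unbounded orbits on $T$), so your claim that ``infinite cell stabilizers are preserved under the equivariant quasi-isometry'' breaks down. The theorem asserts relative hyperbolicity with respect to the peripheral collection coming from $\tilde{Y}$ itself, not from some finer accessibility decomposition, so this mismatch is fatal to the argument as written.
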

%Note that $P$ need not be a peripheral subgroup of some relatively hyperbolic structure as in the setting of \Cref{thm: boundary multiended}.
\cref{T: the finer structure} is particularly useful because the loxodromic elements of $P$ relative to the (conjugacy classes of) vertex stabilizers of $\tilde{Y}$ will act on $\tilde{Y}$ by hyperbolic isometries while the parabolic subgroups are contained in vertex stabilizers of $\tilde{Y}$. 
That $\tilde{Y}$ is a quasi-tree is straightforward and follows from \cite{manning_bottleneck}. We additionally prove that $P$ acts cocompactly on $\tilde{Y}$ and that the one-skeleton of $\tilde{Y}$ is a fine hyperbolic graph so that the action of $P$ on $\tilde{Y}\oskel$ satisfies \cite[Definition 3.4 (RH-4)]{Hruska2010}. 

In the case of \Cref{thm: boundary multiended}, we show in \Cref{Sec: endgame} that the elements of a $P\in\mc{P}$ that act as hyperbolic isometries on $\tilde{Y}$ from \Cref{T: the finer structure} act as hyperbolic isometries of $X$. Similarly, the vertex stabilizers of $\tilde{Y}$ act elliptically on $X$. 
In light of \cite[Theorem 2.6]{RelCannon}, one might ask whether the refined peripheral structure that we obtain for \Cref{thm: boundary multiended} by using \Cref{T: the finer structure} must have one-ended peripherals. The answer is no, see Example~\ref{E: multi ended refined peripherals} below. 

We expect that \Cref{thm: boundary multiended} will be useful for constructing new relatively geometric cubulations. %\footnote{S: removed phrase}
% of relatively hyperbolic groups with arbitrary peripherals. 
The one-ended version, \cite[Theorem 2.6]{RelCannon}, of \Cref{thm: boundary multiended} is already useful for producing relative cubulations of finite volume hyperbolic $3$-manifold groups and producing relative cubulations of certain free-product-by-cyclic groups \cite{DahmaniKrishnaMS}. Settings where we anticipate using  \Cref{thm: boundary multiended} include small cancellation free products of relatively cubulable groups \cite{C16:relCube}, random quotients of free products 
% of relatively geometrically cubulated groups 
in forthcoming work of the authors with  Montee and Steenbock, %\footnote{Should we remove this if we are not citing this result over there? \teddy{we are using it in a way that's pretty substantial, just not directly. I think it's fair to include this as an ad}}
and proving a relatively geometric generalization of Hsu and Wise's work \cite{HW2015} on cubulating malnormal amalgams of cubulated relatively hyperbolic groups by the first author and Groves. 
In the hyperbolic case, \cref{thm: boundary multiended}, coupled with Theorem D of \cite{grovesmanningimproper} can in fact be used to obtain proper and cocompact actions of hyperbolic groups on CAT(0) cube complexes --- as was done recently in \cite{hypbycycliccube}.

% \suraj{In my opinion, another very important reason to look at relative cubulations is to get actual cubulations. This is already possible and quite useful in the hyperbolic case thanks to Groves--Manning. We used it to cubulate hyperbolic hyperbolic-by-cyclic groups by going from relative cubulations to cubulations. In the non-hyperbolic case, a version of Groves--Manning is an important missing piece, so we could mention all of this here if both of you agree.}\teddy{I don't object to including this. I think it will be hard to get some kind of general result in this direction without having some kind of intrinsic reason for believing that the group you have is cubulable independent of the fact that is cubulable rel cubulable -- for example in the random groups paper, this works out but really because we see that we have a system of walls on the blowup}

\subsection{Acknowledgements}
The authors thank Daniel Groves, Nir Lazarovich, Jason Manning, Michah Sageev, and Markus Steenbock for helpful conversations and comments that helped shape this work. 
The first author received support from an AMS-Simons research fellowship that funded travel to collaborate with the second and third author on this paper.
The second author was supported by ISF grant 1226/19 at the Technion, and by the International Centre for Theoretical Sciences during the program Geometry in Groups (Code: ICTS/GIG2024/07).
The third author was partially supported by 
ISF grant 660/20 and 
at the Technion by a Zuckerman Fellowship.
% LabEx of the Institut Henri Poincar\'e (UAR 839 CNRS-Sorbonne Universit\'e).

\section{Properties of refined peripheral structures}

Relatively quasiconvex subgroups of refined peripheral structures are closely related.
In \cite{WenyuanYang2014}, $\mc{P}$ is called a parabolically extended structure for $\mc{Q}$. 
Rephrasing \cite[Theorem~1.1]{WenyuanYang2014}, we obtain:
\begin{proposition}\label{P: old parabolics rel qc}
	Let $(G,\mc{P})$ be relatively hyperbolic and let $(G,\mc{Q})$ be any refined peripheral structure. For all $P\in\mc{P}$ and all $g\in G$, $P^g$ is relatively quasiconvex in $(G,\mc{Q})$. 
\end{proposition}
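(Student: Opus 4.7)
The plan is to recognize this proposition as a direct reformulation of \cite[Theorem 1.1]{WenyuanYang2014}. In Yang's terminology, a relatively hyperbolic pair $(G,\mc{P})$ is a \emph{parabolically extended structure} for $(G,\mc{Q})$ precisely when $(G,\mc{Q})$ is relatively hyperbolic and each $Q \in \mc{Q}$ is conjugate into some $P \in \mc{P}$ --- which is exactly the condition defining a refined peripheral structure in this paper. One of Yang's equivalent characterizations of this relationship is that every $P \in \mc{P}$ is relatively quasiconvex in $(G,\mc{Q})$. Since relative quasiconvexity is invariant under conjugation in $G$, this immediately yields the statement for every conjugate $P^g$, so the cleanest approach is to invoke Yang's theorem directly.

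For a more self-contained argument, I would use Hruska's characterization of relative quasiconvexity via cocompact action on a bounded neighborhood of an orbit in the cusped space \cite{Hruska2010}. Let $Y_{\mc{Q}}$ denote the cusped space of $(G,\mc{Q})$ and fix $P \in \mc{P}$. Since each $Q \in \mc{Q}$ is conjugate into some $P' \in \mc{P}$, the $\mc{Q}$-horoballs split into two types: those based at conjugates of $\mc{Q}$-subgroups lying inside $P$ itself, and those based at conjugates lying inside a different $\mc{P}$-conjugate. The $\mc{Q}$-subgroups contained in $P$ furnish a natural peripheral structure on $P$, and cocompactness of the $P$-action on a neighborhood of a $P$-orbit in $Y_{\mc{Q}}$ would follow by combining this with the cocompact action of $P$ on its own coned-off Cayley graph relative to these peripherals.

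The main technical obstacle will be controlling the interaction between $P$ and horoballs based at conjugates of $\mc{Q}$-subgroups not contained in $P$. Such a horoball at $Q^g$ sits inside a $\mc{P}$-horoball at some $P^{g'} \neq P$, and one must rule out that $P$ could meet a bounded neighborhood of it in an unbounded set. The standard bounded-coset-penetration phenomenon implies that distinct maximal $\mc{P}$-parabolics have bounded coarse intersection, and hence $P$'s excursion into any such $\mc{Q}$-horoball is uniformly bounded. Given that the full argument is carefully worked out in \cite{WenyuanYang2014}, the most efficient route in this paper is to simply quote Yang's theorem, which is what the statement does.
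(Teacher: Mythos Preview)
Your proposal is correct and matches the paper's approach exactly: the paper presents this proposition as a direct rephrasing of \cite[Theorem~1.1]{WenyuanYang2014} with no further argument, which is precisely what your first paragraph does (together with the observation that relative quasiconvexity is conjugation-invariant to cover all $P^g$). Your additional sketch of a self-contained argument via cusped spaces is extra content not present in the paper, but it is consistent with the cited result.
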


Recall that infinite order elements of $(G,\mc{P})$ can be classified as either hyperbolic or parabolic, based on the action of $(G, \mc{P})$ on its Bowditch boundary. See, for instance, \cite{Hruska2010} for details.

A subgroup $H$ is \emph{$\mc P$-full} if for every $P \in \mc P$ and $g\in G$, the intersection $H \cap P^g$ is either finite or a finite index subgroup of $P^g$. 

\begin{proposition}\label{prop: full and qc preserved}
	Suppose that $(G,\mc{P})$ is a relatively hyperbolic pair and $(G,\mc{Q})$ is any refined peripheral structure. If $H\le G$ is $\mc{P}$-full, then $H$ is $\mc{Q}$-full. 
	If in addition, $H$ is $\mc{P}$-relatively quasiconvex then $H$ is $\mc{Q}$-relatively quasiconvex.
\end{proposition}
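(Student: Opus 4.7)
Fullness reduces to a direct case analysis. Fix $Q\in\mc{Q}$ and $g\in G$; by the definition of refinement there exist $h\in G$ and $P\in\mc{P}$ with $Q\subseteq P^h$, whence $Q^g\subseteq P^{hg}$. Apply $\mc{P}$-fullness of $H$ to the conjugate $P^{hg}$: the intersection $H\cap P^{hg}$ is either finite---in which case $H\cap Q^g$ is automatically finite---or of finite index in $P^{hg}$, in which case $H\cap Q^g=(H\cap P^{hg})\cap Q^g$ has index in $Q^g$ at most $[P^{hg}:H\cap P^{hg}]<\infty$, hence is finite-index in $Q^g$. This establishes $\mc{Q}$-fullness.

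The plan for relative quasiconvexity is to use the geometric characterization from \cite{Hruska2010}: $H$ is relatively quasiconvex with respect to a peripheral structure iff every geodesic in the corresponding cusped space with endpoints in $H$ remains within a uniform neighborhood of $H$. Given such a geodesic $\gamma$ in the $\mc{Q}$-cusped space, I would convert it into a uniform quasi-geodesic $\gamma'$ in the $\mc{P}$-cusped space by routing each excursion of $\gamma$ into a $\mc{Q}$-horoball over a coset $fQ^g$ through the enclosing $\mc{P}$-horoball over $fP^{hg}$. Quasigeodesicity of $\gamma'$ is uniform by \cref{P: old parabolics rel qc}, which ensures that each $P^{hg}$ is itself $\mc{Q}$-relatively quasiconvex and so the two horoball systems differ by uniformly controlled distortion. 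Then $\mc{P}$-relative quasiconvexity of $H$ places $\gamma'$ in a uniform neighborhood of $H$; transferring this back to $\gamma$ uses the fullness just established, since the finite or finite-index intersections $H\cap P^{hg}$ uniformly control how $\mc{Q}$-horoball excursions of $\gamma$ can approach $H$.

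The main obstacle is managing uniform constants in this back-and-forth conversion between the two cusped spaces. An alternative and potentially cleaner route is to pass through the induced peripheral structure on $H$: by Hruska, $\mc{P}$-relative quasiconvexity endows $H$ with its own relatively hyperbolic structure whose peripherals are the infinite intersections $H\cap P^g$, and $\mc{P}$-fullness forces each such peripheral to be finite-index in $P^g$. The refinement $\mc{Q}$ then induces a refinement of $H$'s peripheral structure, and $\mc{Q}$-relative quasiconvexity of $H$ follows by combining the resulting relative hyperbolicity of $H$ with \cref{P: old parabolics rel qc}. I expect this second route, which exploits Yang's machinery from \cite{WenyuanYang2014} throughout, to be the most direct to write down.
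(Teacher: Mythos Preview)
Your fullness argument is correct and is exactly what the paper means by ``straightforward.''

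For relative quasiconvexity, the paper takes a much shorter route than either of your two plans: it simply invokes \cite[Theorem~1.3]{WenyuanYang2014} as a black box. That theorem gives precisely the implication ``$\mc{P}$-relatively quasiconvex $\Rightarrow$ $\mc{Q}$-relatively quasiconvex'' once one knows that (a) each $P^g$ is $\mc{Q}$-relatively quasiconvex, which is \cref{P: old parabolics rel qc}, and (b) each $H\cap P^g$ is finite or finite-index in $P^g$, which is the fullness hypothesis. No cusped-space geometry and no detour through an induced peripheral structure on $H$ is needed. Your second route is morally in the same direction---it ultimately appeals to Yang's results---but it is still more elaborate than necessary; your first route would require genuine work to control the uniform constants and is best avoided when Yang's theorem is available off the shelf.
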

\begin{proof}
	Fullness is straightforward.
	Each $P^g$ is $\mc{Q}$-relatively quasiconvex by \cref{P: old parabolics rel qc}.
	Since $H$ is $\mc{P}$-full, either $H \cap P^g$ is finite or finite index in $P^g$.  
	Therefore, by \cite[Theorem~1.3]{WenyuanYang2014} $H$ is $\mc{Q}$-relatively quasiconvex.
\end{proof}

\section{Some facts about cubical actions}

% The following first appeared in the proof of Theorem 5.1 of Sageev's thesis \cite{Sageev95} and was later explicitly
% stated by Caprace as Proposition B.8 in \cite{CapraceChatterjiFernosIozzi}: 
% \suraj{We perhaps shouldn't attribute only Caprace}
% \thomas{Saw this result referenced in the same way elsewhere.  Happy to change it. \suraj{Changed.}}

In this section, we introduce a 
% number of technical 
few
results that will be useful for working with actions on CAT(0) cube complex.

Groups that act by isometries on a finite dimensional CAT(0) cube complex are semisimple in the following sense:
\begin{proposition}
	[{\cite[Proposition~B.8]{CapraceChatterjiFernosIozzi}}]
	\label{prop:semisimple}
	Let $X$ be a finite dimensional $\CAT(0)$ cube complex. 
	Let $K \leq Isom(X)$ be a finitely generated subgroup with unbounded orbit. 
	There exists $g \in K$ and hyperplane $\hyp \subset X$ with halfspace $\hyp_+$ such that $g\hyp_+ \subsetneq \hyp_+$.
\end{proposition}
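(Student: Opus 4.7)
The plan is to produce a single element $g \in K$ acting hyperbolically on $X$. Such a $g$ has a combinatorial axis that skewers some hyperplane $\hyp$, so taking $\hyp_+$ to be the halfspace into which $g$ translates yields $g\hyp_+ \subsetneq \hyp_+$ immediately. The work is therefore entirely in exhibiting one hyperbolic element.

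First, I would invoke Haglund's semisimplicity theorem for cellular isometries of finite dimensional $\CAT(0)$ cube complexes: every such isometry is either \emph{elliptic}, fixing a cube of $X$, or \emph{hyperbolic}, acting by positive combinatorial translation length along an axis and hence skewering at least one hyperplane. This dichotomy reduces the problem to ruling out the case that every element of $K$ is elliptic. Since $X$ is finite dimensional, one may work entirely combinatorially with the $\ell^1$ metric and the median structure.

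Next, I would argue by contradiction, assuming every element of $K$ is elliptic. Fix a finite generating set $\{s_1,\dots,s_k\}$ with each $s_i$ fixing a cube $C_i$. By the cubical analogue of Serre's lemma — if two elliptic cellular isometries $a,b$ of a $\CAT(0)$ cube complex have $ab$ also elliptic, then $a$ and $b$ share a fixed cube — the pairwise products $s_is_j$ being elliptic forces pairwise compatibility of the fixed sets. Combining this with a Helly-type property for convex subcomplexes of a finite dimensional $\CAT(0)$ cube complex (the Helly number is bounded in terms of $\dim X$), I would deduce that the fixed-set data $\{C_i\}$ admits a common fixed cube. Any such cube is $K$-fixed, contradicting the unbounded orbit hypothesis. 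Hence some $g \in K$ is hyperbolic, and the desired halfspace inclusion follows from semisimplicity.

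The main obstacle is the reduction from pairwise compatibility of the $C_i$ to a global $K$-fixed cube. This step uses the median geometry of $X$ essentially and requires the finite dimensionality of $X$ in a nontrivial way; in the tree case it is Serre's fixed point theorem, but in higher dimensional cube complexes one may need to pass to the Roller compactification to locate a common fixed point, and then argue that the resulting fixed point (or a $K$-invariant bounded subcomplex produced from it) lies in $X$. I would consult Haglund's original semisimplicity work and the rank-one machinery developed by Caprace--Sageev for the precise technical formulation, and verify that the hypothesis of finite dimensionality — rather than any essentiality or properness of the action — is sufficient.
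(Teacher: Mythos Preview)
The paper does not prove this proposition; it is quoted verbatim as \cite[Proposition~B.8]{CapraceChatterjiFernosIozzi} and used as a black box. There is therefore no proof in the paper to compare your proposal against.

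As for the sketch itself: the strategy via Haglund's semisimplicity and a Serre--Helly argument is the standard route, and you correctly identify the delicate step. Two cautions. First, your ``cubical Serre lemma'' is stated imprecisely: what one actually uses is that if $a,b$ are elliptic with disjoint fixed-point sets (in the cubical subdivision), then $ab$ is hyperbolic; the converse implication you wrote is not quite the right formulation. Second, pairwise intersection of fixed-point sets plus the Helly property for convex subcomplexes does give a global fixed point in a finite dimensional cube complex, but you also need that fixed-point sets of elliptic isometries are nonempty convex subcomplexes of the first cubical subdivision, not merely that some cube is fixed setwise. With those adjustments the argument goes through and is essentially what appears in the cited reference; your mention of the Roller boundary is unnecessary here since finite dimensionality already makes the Helly step work inside $X$.
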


The following condition is useful for identifying which elements of a group acting on a CAT(0) cube complex act loxodromically:
\begin{definition}[{\cite[Above Lemma~5.4]{BergeronWise}}]
	\label{def:axis-sep} 
	Let $\mc{H}$ be a collection of codimension--$1$ subgroups of $G$. 
	An infinite order element $h\in G$ satisfies the \textbf{axis separation condition} for $\mc{H}$ if there exists $H \in \mc{H}$, $n \in \mathbb{N}$ and $g \in G$ such that some choice of halfspace $H^g_+$ satisfies
	\[
	h^{-n}H^g_+ \subsetneq H^g_+ \subsetneq h^n H^g_+.
	\]
\end{definition}

\begin{definition}\label{D: skewered}
	A hyperplane $\hyp$ in a CAT(0) cube complex is \textbf{skewered} by the hyperbolic isometry $h$ if there is a choice of halfspace $\hyp^+$ and exponent $n$ such that $h^n\hyp^+ \subsetneq \hyp^+$.  
\end{definition}

The following behavior is straightforward to see and is pointed out by Bergeron and Wise \cite{BergeronWise}.  

\begin{lemma}
	\label{L: skewering separates axes}
	Suppose that $(G, \mc{Q})$ is a relatively hyperbolic pair and that $G$ acts on a CAT(0) cube complex. Let $\hyp$ be a hyperplane with relatively quasiconvex stabilizer $H$. If $\hyp$ is skewered by a loxodromic element $h \in G$ then 
	$h$ satisfies axis separation for any collection of subgroups containing $\Stab(\hyp)$.  
\end{lemma}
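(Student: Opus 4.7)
The plan is to manipulate the definition of skewering directly by passing to the complementary halfspace of $\hyp$. First I would unpack the skewering hypothesis: by \cref{D: skewered}, there exist a halfspace $\hyp^+$ bounded by $\hyp$ and an integer $n \ge 1$ such that $h^n \hyp^+ \subsetneq \hyp^+$. Let $\hyp^-$ denote the complementary halfspace. Since $h^n$ is a bijection of the cube complex, taking complements of both sides reverses the inclusion and gives $\hyp^- \subsetneq h^n \hyp^-$. Applying the bijection $h^{-n}$ to this inclusion yields $h^{-n} \hyp^- \subsetneq \hyp^-$. Chaining the two produces
\[
h^{-n}\hyp^- \subsetneq \hyp^- \subsetneq h^n \hyp^-,
\]
which is precisely the nested strict containment required by \cref{def:axis-sep}, witnessed by $H = \Stab(\hyp)$ (present in the collection by hypothesis), group element $g = e$, power $m = n$, and halfspace $\hyp^-$.

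To apply \cref{def:axis-sep} we also need $h$ to have infinite order, but this is automatic: a loxodromic (hyperbolic) isometry of a CAT(0) cube complex has positive translation length and hence infinite order. I do not anticipate any obstacle; the lemma is essentially a direct unraveling of the two relevant definitions together with the bijectivity of $h$. The relative hyperbolicity of $(G,\mc{Q})$ and the relative quasiconvexity of $H$ are not actually invoked in this short argument; they appear to be contextual hypotheses reflecting the setting in which this lemma is meant to be applied elsewhere in the paper.
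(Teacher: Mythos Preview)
Your argument is correct and considerably more direct than the paper's. The paper's sketch works on the Bowditch boundary: it uses relative quasiconvexity of $H=\Stab(\hyp)$ to know that $\Lambda_{\mc Q}H$ is a closed proper subset of $\partial_{\mc Q}G$, observes that the conical fixed points of the loxodromic $h$ lie outside $\Lambda_{\mc Q}H$, and then appeals to north--south dynamics to move $\Lambda_{\mc Q}H$ off itself by a large power of $h$, from which the nested halfspace chain is read off. By contrast, you stay entirely inside the cube complex and obtain the chain $h^{-n}\hyp^-\subsetneq\hyp^-\subsetneq h^n\hyp^-$ by complementing the skewering inclusion; infinite order of $h$ is forced already by iterating $h^{n}\hyp^+\subsetneq\hyp^+$. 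Your route shows that the relative hyperbolicity and relative quasiconvexity hypotheses are not actually needed for the conclusion, whereas the paper's argument genuinely consumes them.

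One small point worth making explicit: \cref{def:axis-sep}, following Bergeron--Wise, speaks of a halfspace $H^g_+$ attached to the codimension--$1$ subgroup $H^g$, i.e.\ a subset of $G$, while your chain lives among halfspaces of $X$. The passage between the two is the standard one---pull back $\hyp^\pm$ along an orbit map $G\to X$ to obtain the $H$--wall in $G$, and this correspondence is $G$--equivariant and inclusion-preserving---but since the lemma is stated for an arbitrary $G$--action on a CAT(0) cube complex rather than specifically the Sageev dual, it is worth saying one sentence to bridge the two pictures.
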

%   NOTE TO SELF    %   The statement is observed in the introduction of Bergeron--Wise p845 para after Thm 1.4.  Relatively quasi-convexity of hyperplane stabilizers means that the limit sets are closed proper subsets of the Bowditch boundary.  North-south dynamics of loxodromic elements on the boundary of hyperbolic spaces.  
\begin{proof}[Proof sketch]
	By relative quasiconvexity of $H = \Stab(\hyp)$ the limit set $\Lambda_\mc{Q}H$ is a closed proper subset of $\boundary_\mc{Q}G$ disjoint from the conical limit points fixed by $h$.  A sufficiently large power of $h$ will move $\Lambda_\mc{Q}(H)$ completely off of itself %into an open neighborhood disjoint from $\Lambda_\mc{Q}(H)$
	by north-south dynamics of loxodromic elements on the Bowditch boundary, and the result follows. 
\end{proof}

The following appears in the proof of Lemma 5.5 of \cite{BergeronWise}:

\begin{lemma}
	% [{Proof of \cite[Lemma~5.5]{BergeronWise}}]
	\label{lem:stabilizer}
	Let $(G, \mc{Q})$ be a relatively hyperbolic group
	% Under the hypotheses of \Cref{thm: boundary multiended}, 
	and $\mc{H}_0$ be a finite collection of codimension-1 subgroups so that every hyperbolic element of $(G,\mc{Q})$ satisfies the axis separation condition for $\walls$. 
	Then every elliptic subgroup for the action on the cube complex dual to $\mc{H}_0$ is either finite or contained in some parabolic subgroup $Q^g$ for $Q \in \mc{Q}$ and $g \in G$. 
\end{lemma}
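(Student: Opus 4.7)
The plan is in two steps: first use the axis separation hypothesis to rule out relatively loxodromic elements in $K$, and then apply the standard classification of subgroups of a relatively hyperbolic group that contain no relatively loxodromic element.

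For the first step, suppose $h\in K$ is hyperbolic in $(G,\mc{Q})$. By hypothesis $h$ satisfies the axis separation condition for $\walls$, so by \Cref{def:axis-sep} there exist $H\in\walls$, $g\in G$, and $n\in\mathbb{N}$ with $h^{-n}H^g_+\subsetneq H^g_+\subsetneq h^n H^g_+$. This exhibits the hyperplane of $X$ dual to $H^g$ as skewered by $h^n$, forcing $h$ to act loxodromically on $X$. But $K$ acts with bounded orbits on $X$, so no element of $K$ acts loxodromically on $X$ --- a contradiction. Hence every infinite-order element of $K$ is parabolic in $(G,\mc{Q})$, and in particular lies in some $Q^g$.

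For the second step, I would invoke the standard structural fact that any subgroup of a relatively hyperbolic group containing no relatively loxodromic element is either finite or contained in a single conjugate parabolic (cf.\ \cite{Hruska2010}). Combined with the first step, this gives that $K$ is either finite or contained in $Q^g$ for some $Q\in\mc{Q}$, $g\in G$. A hands-on version runs as follows: fix an infinite-order $k\in K$, which by the first step is parabolic with unique Bowditch fixed point $p\in\boundary_{\mc{Q}}G$ and $\Stab_G(p)=Q^g$ the maximal parabolic at $p$; if some other infinite-order $k'\in K$ fixed a distinct parabolic point, a ping-pong argument on sufficiently high powers of $k$ and $k'$ would produce a relatively loxodromic element in $\langle k,k'\rangle\leq K$, contradicting the first step. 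So all infinite-order elements of $K$ share the fixed point $p$ and lie in $Q^g$. For a torsion element $t\in K$, the conjugate $tkt\inv\in K$ is again an infinite-order parabolic in $K$, hence fixes $p$ by the previous sentence, but it also fixes $t\cdot p$, and a parabolic element has a unique Bowditch fixed point, forcing $t\cdot p=p$ and thus $t\in Q^g$. I expect the only real subtlety is when $K$ is infinite torsion with no infinite-order element, where one genuinely needs the general structural result rather than this elementary dynamical argument.
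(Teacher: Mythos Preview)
The paper does not supply its own proof of this lemma; it simply records that the statement appears within the proof of \cite[Lemma~5.5]{BergeronWise}. Your two-step argument is correct and is essentially what one extracts from that source. Step one is immediate: axis separation for $h$ yields a strictly nested chain of halfspaces in $G$, which passes to a strictly nested chain of halfspaces in the dual cube complex $X$ (nesting of halfspaces in $G$ is exactly the ultrafilter consistency condition), so $h$ skewers a hyperplane of $X$ and acts loxodromically --- impossible for $h\in K$. Step two is indeed the standard classification, most cleanly phrased via Tukia's results on convergence groups applied to the action of $G$ on $\boundary_{\mc{Q}}G$: a subgroup containing no loxodromic has limit set of cardinality at most one, hence is finite or fixes a unique boundary point and lies in the corresponding maximal parabolic. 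Your diagnosis of the infinite-torsion case as the one place genuinely requiring the general structural result rather than the hands-on ping-pong is accurate, and the convergence-group classification covers it.
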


When the maximal parabolics act elliptically, we obtain a cocompact action. 

\begin{lemma}\label{L: cocompact follows}
	Let $(G,\mc{Q})$ be a relatively hyperbolic group. Let $\mathcal{H}_0$ be a finite collection of full relatively quasiconvex codimension-1 subgroups and let $X$ be the dual CAT(0) cube complex. If each $Q \in \mathcal{Q}$ acts elliptically on $X$, the action of $G$ on $X$ is cocompact.  
\end{lemma}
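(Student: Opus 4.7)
The plan is to adapt the cocompactness arguments from \cite[Section 5]{BergeronWise} and \cite[Theorem 2.6]{RelCannon}, which handled the purely hyperbolic case and the one-ended-parabolic case respectively. The ellipticity hypothesis on $\mc{Q}$ plays the role in our setting that one-endedness of peripherals played in the earlier work.

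First, I would establish that $X$ is finite-dimensional. Since $\walls$ is finite and each $H_i \in \walls$ is $\mc{Q}$-full and relatively quasiconvex, standard finite height results for relatively quasiconvex subgroups (as in \cite{HruskaWise14}) bound the cardinality of any pairwise-crossing collection of $G$-translates of the $H_i$, which in turn bounds $\dim X$. Second, by finiteness of $\walls$, the $G$-action on the hyperplanes of $X$ has finitely many orbits. Combined with finite-dimensionality of $X$, to conclude cocompactness it suffices to show the action of $G$ on the vertex set of $X$ is cobounded.

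To establish coboundedness, I would fix a basepoint $v_0 \in X$ and, for each conjugacy representative $Q \in \mc{Q}$, use the ellipticity hypothesis to choose a vertex $v_Q$ of $X$ fixed by $Q$ (passing to the cubical subdivision, if necessary, so that the barycenter of any $Q$-fixed cube becomes a vertex). Let $V_0 = \{v_0\} \cup \{v_Q : Q \in \mc{Q}\}$; this is a finite set. I claim $G \cdot V_0$ is coarsely dense in $X^{(1)}$. Given a vertex $v \in X$, I would count the hyperplanes separating $v$ from the closest point of $G \cdot V_0$ using the separation axiom together with \Cref{lem:stabilizer}: the infinite elliptic subgroups are precisely the subgroups of conjugates of $\mc{Q}$, and up to $G$-action their fixed-point sets are represented by the $v_Q$, while the hyperbolic elements all skewer some hyperplane (via \Cref{prop:semisimple} and \Cref{L: skewering separates axes}) and hence provide loxodromic axes that translate far-away vertices back into bounded neighborhoods of $v_0$.

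The main obstacle will be making the coboundedness claim rigorous, that is, producing the uniform bound on the number of hyperplanes between $v$ and $G \cdot V_0$. The clean way to argue is to transfer the separation geometry of $\boundary_\mc{Q} G$ to the cubical setting: an unbounded sequence of vertices $v_n$ in distinct orbits would, after passing to a subsequence, converge to a boundary point $\alpha \in \boundary_{\mc{Q}} G$, which is either a conical limit point (yielding a hyperbolic element whose axis sweeps the $v_n$ back into $G \cdot v_0$ by north-south dynamics) or a parabolic point stabilized by some $Q^g$ (forcing $v_n$ into a bounded neighborhood of $gv_Q$ by ellipticity of $Q^g$). Carefully formalizing this boundary-to-cubical transfer, and checking that the separation axiom for $\walls$ gives the required uniform wall-crossing bound, is the technical heart of the argument.
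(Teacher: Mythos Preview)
Your proposal takes a genuinely different route from the paper, and it has a real gap.

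The paper's argument is essentially a one-line application of Hruska--Wise's relative cocompactness theorem \cite[Theorem~7.12]{HruskaWise14}: because every $H\in\walls$ is relatively quasiconvex, the action of $G$ on $X$ is \emph{relatively cocompact}, meaning there is a compact subcomplex $K$ and, for each $Q\in\mc{Q}$, a $Q$-invariant convex subcomplex $C_Q$ on which $Q$ acts cocompactly, with
\[
X = GK \cup \bigcup_{Q\in\mc{Q}} GC_Q.
\]
The ellipticity hypothesis then forces each $C_Q$ to be compact, and cocompactness of $X$ follows immediately. No boundary dynamics, no separation axiom, no reference to \Cref{lem:stabilizer} is needed.

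The gap in your approach is that you invoke both the separation axiom and \Cref{lem:stabilizer}, neither of which is available here. The hypotheses of \Cref{L: cocompact follows} say only that the $H_i$ are full and relatively quasiconvex and that the peripherals act elliptically; there is no assumption that limit sets of the $H_i$ separate pairs of boundary points, nor that hyperbolic elements of $(G,\mc{Q})$ satisfy axis separation for $\walls$. Your coboundedness argument leans on exactly these missing ingredients: you want hyperbolic elements to skewer hyperplanes (which needs axis separation), and you want to read off the structure of infinite elliptic subgroups from \Cref{lem:stabilizer} (which again needs axis separation as a standing hypothesis). Without these, the ``boundary-to-cubical transfer'' you describe has no traction: a sequence of vertices escaping every $G$-orbit need not correspond to anything in $\boundary_{\mc{Q}}G$, and even if it did, you would have no mechanism to pull it back.

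If you want a self-contained argument avoiding \cite[Theorem~7.12]{HruskaWise14}, you would essentially have to reprove relative cocompactness from relative quasiconvexity, which is substantial. The cleaner fix is simply to cite that theorem and observe that ellipticity of each $Q$ makes $C_Q$ bounded, hence compact.
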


\cref{L: cocompact follows} follows from Proposition 2.5 of \cite{RelCannon}; we give a sketch here for completeness. Since each subgroup $H \in \mathcal{H}_0$ is relatively quasiconvex, by \cite[Theorem 7.12]{HruskaWise14}, the action of $G$ on $X$ is \emph{relatively cocompact}, i.e, there exists a compact subcomplex $K$ and for each $Q \in \mathcal{Q}$ a  convex subcomplex $C_{Q}$ stabilised by $Q$ such that $$X = GK \cup \bigcup_{Q \in \mathcal{Q}} GC_Q,$$
with $gC_Q \cap g'C_{Q'}$ being either equal to $gC_Q$, or contained in $GK$.
By the assumption that each $Q$ acts elliptically, $C_Q$ is compact and thus $X$ is $G$-cocompact. 

The last result in this section is about the commensurability of cube stabilizers with maximal parabolic subgroups. First, we will need the following auxiliary lemma: 

\begin{lemma}\label{L: cat0 to combinatorial}
	Let $X$ be a CAT(0) cube complex, and let $G$ be a group acting combinatorially by isometries on $X$. If $K\le G$ stabilizes vertices $v_1$ and $v_2$, then 
	there exists a combinatorial geodesic $\sigma$ between $v_1$ and $v_2$ and a finite index $K_0 \le K$ such that $K_0$ stabilizes $\sigma$ pointwise. 
	% \suraj{(Changed ordering of the phrases)} OK -TN
	% there is a finite index $K_0\le K$ so that $K_0$ stabilizes a combinatorial path between $v_1$ and $v_2$ pointwise. 
\end{lemma}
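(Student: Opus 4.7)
The plan is to reduce the problem to a finite permutation action, by letting $K$ act on the set of combinatorial geodesics from $v_1$ to $v_2$, and then passing to the finite-index kernel of this action.

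First I would let $\mathcal{W}$ denote the set of hyperplanes of $X$ separating $v_1$ from $v_2$; its cardinality equals the combinatorial distance $n = d(v_1, v_2)$, which is finite. Next, let $\mathcal{G}$ denote the set of combinatorial geodesics in $X$ from $v_1$ to $v_2$. This set is nonempty, since any two vertices of a CAT(0) cube complex are joined by a combinatorial geodesic, and it is finite: each $\sigma \in \mathcal{G}$ crosses every hyperplane of $\mathcal{W}$ exactly once, so $\sigma$ is determined by the total order in which its hyperplane crossings occur, giving $|\mathcal{G}| \leq n!$.

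Since $K$ acts by combinatorial isometries and fixes both endpoints, it permutes $\mathcal{G}$. Let $K_0 \leq K$ be the kernel of this permutation action; then $K_0$ has finite index in $K$ and stabilizes every $\sigma \in \mathcal{G}$ setwise. Fixing any $\sigma \in \mathcal{G}$ with vertex sequence $v_1 = w_0, w_1, \ldots, w_n = v_2$, I would verify that $K_0$ fixes $\sigma$ pointwise. Indeed, each $w_i$ is the unique vertex of $\sigma$ at combinatorial distance $i$ from $v_1$, so $K_0$---which preserves $\sigma$, fixes $v_1$, and preserves distances---must fix each $w_i$. Consecutive vertices of a combinatorial geodesic span a unique edge of $X$, and a combinatorial isometry of a $1$-cube fixing both endpoints is the identity; hence $K_0$ fixes $\sigma$ pointwise.

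I do not anticipate a serious obstacle. The only subtle point is selecting the correct finite $K$-set. For instance, working instead with the action on $\mathcal{W}$ alone would yield a finite-index subgroup preserving each halfspace of each hyperplane of $\mathcal{W}$, but such a subgroup can still permute hyperplanes outside $\mathcal{W}$ and thereby move intermediate vertices $w_i$ to distinct vertices with the same $\mathcal{W}$-signature. Passing to the action on $\mathcal{G}$ itself sidesteps this complication and gives the lemma directly.
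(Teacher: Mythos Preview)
Your argument is correct and takes a genuinely different route from the paper's. The paper works with the CAT(0) (metric) geodesic $\gamma$ from $v_1$ to $v_2$: since $X$ is uniquely geodesic, $K$ fixes $\gamma$ pointwise, hence stabilizes each of the finitely many cubes $C_1,\ldots,C_k$ whose interiors $\gamma$ meets; intersecting the finite-index subgroups that fix each $C_i$ vertexwise yields $K_0$, and a combinatorial path through $\bigcup C_i$ is then fixed pointwise. Your approach is purely combinatorial: you bypass the CAT(0) metric entirely by letting $K$ act on the finite set $\mathcal{G}$ of combinatorial geodesics (finiteness coming from the injection into orderings of the separating hyperplanes, using that hyperplanes do not self-osculate), and take $K_0$ to be the kernel. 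Your route is arguably cleaner, directly produces a combinatorial \emph{geodesic} rather than a path that one must then shorten, and would work verbatim in any median graph without reference to the CAT(0) metric; the paper's route, on the other hand, keeps the argument local to the cubes along a single CAT(0) geodesic and may yield a $K_0$ of smaller index in some cases.
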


\begin{proof}
	Let $\gamma$ be the CAT(0) geodesic between $v_1$ and $v_2$. 
	Then there exists a finite sequence of cubes $C_1,C_2,\ldots,C_k$ so that $\gamma$ intersects the interior of $C_i$ and $\gamma = \bigcup_{i=1}^k (\gamma\cap C_i)$. By indexing following the path of $\gamma$ from $v_1$ to $v_2$, we may arrange for $C_i\cap C_{i+1}\ne\emptyset$. 
	
	Observe that $K$ stabilizes $\gamma$ pointwise because $X$ is CAT(0) and therefore uniquely geodesic. 
	Since $K$ stabilizes a point in the interior of each $C_i$, a finite index subgroup $K_i\le K$ stabilizes each vertex of $C_i$ and therefore stabilizes $C_i$ pointwise.
	%there is an implicit claim here that the union of the C_i is combinatorially connected. When $C_i\cap C_{i+1}\ne \emptyset$ the two must share a common face
	If $K_0 = \bigcap_{i=1}^k K_i$, $K_0$ stabilizes every $C_i$ pointwise. 
	Since $C_i\cap C_{i+1}\ne\emptyset,$ $C_i$ and $C_{i+1}$ share a common cube, their intersection contains a vertex. 
	Thus there is a combinatorial path in $\bigcup_{i=1}^k C_i$ from $v_1$ to $v_2$, which must be stabilized pointwise by $K_0$. 
\end{proof}

\begin{lemma}\label{P:stabilizers commensurable max parabolic}
	Let $(G,\mc{Q})$ be a relatively hyperbolic pair so that $G$ acts on a CAT(0) cube complex $X$ cocompactly by isometries with the following properties:
	\begin{enumerate}
		\item every maximal parabolic acts elliptically,
		\item every infinite cell stabilizer is parabolic, and
		\item every hyperplane stabilizer is full.
	\end{enumerate}
	Then every infinite cell stablizer is finite index in a maximal parabolic subgroup. 
\end{lemma}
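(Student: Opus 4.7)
The plan is to exhibit a finite-index subgroup of $Q^g$ that stabilizes $C$. Let $S$ be the infinite stabilizer of a cube $C$. By (2), $S \leq Q^g$ for some $Q \in \mc{Q}$, $g \in G$, and by (1), $Q^g$ fixes some vertex $v$ of $X$. Since $C$ has finitely many vertices, there is a finite-index subgroup $S' \leq S$ fixing each vertex of $C$. Showing that $S$ has finite index in $Q^g$ reduces to producing a finite-index subgroup $K \leq Q^g$ with $K \leq \Stab(C)$.

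Fix a vertex $w$ of $C$. Then $S'$ fixes both $v$ (via the containment $S' \leq Q^g$) and $w$, so by \Cref{L: cat0 to combinatorial} there is a combinatorial geodesic $\sigma_w$ from $v$ to $w$ and a further finite-index subgroup of $S'$ that fixes $\sigma_w$ pointwise, hence stabilizes every hyperplane $\hyp$ crossing $\sigma_w$. By hypothesis (3), each such $\Stab(\hyp)$ is $\mc{Q}$-full, so $\Stab(\hyp) \cap Q^g$ is either finite or finite index in $Q^g$; since it contains an infinite subgroup, it has finite index. Intersecting over the finitely many hyperplanes $\hyp_1,\ldots,\hyp_{d(v,w)}$ crossed by $\sigma_w$ yields a finite-index subgroup
\[
K_w \;=\; \bigcap_{i} \bigl(\Stab(\hyp_i) \cap Q^g\bigr) \;\leq\; Q^g
\]
that fixes $v$ and stabilizes each $\hyp_i$.

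The key step is to show $K_w$ fixes $w$. For any $k \in K_w$, the hyperplanes separating $v = kv$ from $kw$ are precisely $k\cdot\{\hyp_1,\ldots,\hyp_{d(v,w)}\} = \{\hyp_1,\ldots,\hyp_{d(v,w)}\}$, which is the same set that separates $v$ from $w$. Since a vertex of a CAT(0) cube complex is uniquely determined by the set of hyperplanes separating it from a fixed basepoint, $kw = w$. Finally, setting $K = \bigcap_w K_w$ over the finitely many vertices $w$ of $C$ produces a finite-index subgroup of $Q^g$ that fixes every vertex of $C$; any cubical isometry fixing all vertices of a cube fixes the cube pointwise, so $K \leq \Stab(C) = S$ and therefore $[Q^g : S] < \infty$. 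The main obstacle is the hyperplane-signature identification of $kw$ with $w$; everything else is an assembly of fullness of hyperplane stabilizers, \Cref{L: cat0 to combinatorial}, and the elementary fact that a finite intersection of finite-index subgroups is finite index.
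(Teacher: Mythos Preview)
Your argument is correct and follows essentially the same strategy as the paper: reduce to a vertex, invoke \Cref{L: cat0 to combinatorial} to obtain a combinatorial path from the $Q^g$-fixed vertex $v$ to $w$, use fullness of hyperplane stabilizers to get a finite-index subgroup of $Q^g$ stabilizing each hyperplane along the path, and conclude that this subgroup fixes $w$. The only difference is in how that last step is justified: the paper proceeds edge by edge, using that hyperplanes in a CAT(0) cube complex do not self-osculate (so a group fixing one endpoint of an edge and stabilizing the dual hyperplane must fix the edge), whereas you argue globally that $kw$ and $w$ are separated from $v$ by the same set of hyperplanes and hence coincide. These are two phrasings of the same CAT(0)-cubical fact, so the proofs are essentially identical.
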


\begin{proof}
	Let $K$ be an infinite stabilizer of a cell $c$ and let $P_0$ be the maximal parabolic subgroup that contains $K$. We will show $K$ is finite index in $P_0$. We may assume that $c$ is a vertex because a finite index subgroup of $K$ stabilizes every vertex of $c$. 
	Since $P_0$ acts elliptically, there is a vertex $v$ stabilized by $P_0$. 
	
	By \cref{L: cat0 to combinatorial}, there is a finite index $K_0\le K$ so that $K_0$ stabilizes a combinatorial path $\sigma$ from $v$ to $c$ pointwise. 
	Let $e$ be the edge incident to $v$ in $\sigma$ (stabilized by the infinite subgroup $K_0$). 
	% We will show that $e$ is stabilized by a finite index subgroup of $P_0$. 
	Let $\hyp$ be the hyperplane dual to $e$. Note that $P_{\hyp} = \Stab_G(\hyp) \cap P_0$ is finite index in $P_0$ as hyperplane stabilizers are full. Also, since $P_{\hyp}$ fixes a vertex of $e$, it has to fix $e$ as hyperplanes of CAT(0) cube complexes do not self osculate. The proof now follows by repeating the argument on each edge of $\sigma$ until we reach $c$.
	% 
	% \begin{claim}
		%     let $e$ be an edge with one vertex stabilized by $P$, a finite index subgroup of $P_0$. If $e$ has infinite stabilizer, then the stabilizer of $e$ is finite index in $P_0$ (and in particular, the other vertex also has finite index stabilizer in $P_0$).
		% \end{claim}
	% 
	% Suppose $H$ is the (infinite) stabilizer of $e$ and let $W$ be the hyperplane dual to $e$. Then $H\le P\le P_0$, and $H$ stabilizes $W$. Therefore, $P_W = \Stab_G(W)\cap P_0$ is finite index in $P_0$ because hyperplane stabilizers are full. 
	% Now $P\cap P_W$ fixes $e$ because $P\cap P_W$ fixes a vertex of $e$ and $W$, but $W$ cannot intersect two edges incident to a single vertex (hyperplanes of a CAT(0) cube complex cannot self osculate). 
	% Thus $P\cap P_W$ is finite index in $P_0$ and contained in $H$, so $H$ is finite index in $P_0$. This completes the proof of the claim. 
	% 
	% Since $K_0$ stabilizes $\sigma$ pointwise, every vertex of $\sigma$ has infinite stabilizer. 
	% Since $\sigma$ has a finite number of edges and $P_0$ stabilizes $v$, we can apply the claim repeatedly to show that each vertex of $\sigma$ has a stabilizer that is finite index in $P_0$. 
	Thus $c$ has stabilizer which is finite index in $P_0$. 
\end{proof}

\section{Finite codimension-1 subgroups witness relative hyperbolicity}

In this section, we prove \cref{T: the finer structure}, which we restate here for the reader's convenience. 

\finitecodim*

\begin{proof}
	There are three subclaims which we prove as propositions and then use these to prove \Cref{T: the finer structure} using Bowditch's characterization of relative hyperbolicity in terms of actions on fine hyperbolic graphs \cite{BowditchRH,Hruska2010}. 
	\begin{proposition}
		$\tilde{Y}$ is a quasi-tree. 
	\end{proposition}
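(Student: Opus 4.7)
The plan is to verify Manning's bottleneck criterion from \cite{manning_bottleneck}, which characterizes quasi-trees among geodesic metric spaces by requiring the existence of a uniform constant $\Delta > 0$ such that, for every pair of points $x,y$, every midpoint $m$ of $x,y$, and every continuous path $\gamma$ from $x$ to $y$, the path $\gamma$ passes within $\Delta$ of $m$. I will apply this to $\tilde{Y}\oskel$ equipped with the combinatorial edge-path metric.

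The central observation is that the hyperplanes of $\tilde{Y}$ have uniformly bounded diameter. Because the family $\{K_1,\ldots,K_n\}$ is finite, Sageev's construction produces only finitely many $P$-orbits of hyperplanes of $\tilde{Y}$. The stabilizer of each hyperplane is conjugate to some $K_i$ and is therefore finite. Since each hyperplane stabilizer acts cocompactly on its hyperplane (a structural feature of the Sageev construction applied to a finite family of codimension-1 subgroups), finiteness of the stabilizer forces the hyperplane itself to have bounded diameter. Taking the maximum over finitely many orbit representatives yields a uniform bound $D$ on the diameter of every hyperplane, and therefore a uniform bound $D+2$ on the diameter of every hyperplane carrier $N(\hyp)$.

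Granting bounded hyperplane diameter, the bottleneck property is then essentially formal. Given combinatorial vertices $x,y \in \tilde{Y}\oskel$ and a combinatorial geodesic $\sigma=(x=v_0,v_1,\ldots,v_k=y)$, the hyperplanes $\hyp_1,\ldots,\hyp_k$ dual to the edges of $\sigma$ are precisely the hyperplanes separating $x$ from $y$. Let $m = v_{\lfloor k/2 \rfloor}$; then $m$ lies in the carrier of $\hyp_{\lfloor k/2 \rfloor}$. Any path $\gamma$ from $x$ to $y$ must cross every separating hyperplane and in particular must enter $N(\hyp_{\lfloor k/2 \rfloor})$. Since this carrier has diameter at most $D+2$ and contains $m$, the path $\gamma$ comes within $D+2$ of $m$, verifying Manning's bottleneck criterion.

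The main obstacle I expect to encounter is cleanly establishing the uniform bound on hyperplane diameters, particularly justifying the cocompact action of each hyperplane stabilizer on its hyperplane without first invoking cocompactness of $P$ on $\tilde{Y}$ (which will only be proved in the next subclaim). The natural workaround is to argue directly at the level of the Sageev construction: the edges dual to a given hyperplane correspond to cosets of its stabilizer in $P$, and consecutive dual edges in the hyperplane graph correspond to squares, so a purely combinatorial analysis of the finite wall system associated to the $K_i$ produces the required bound on each orbit representative.
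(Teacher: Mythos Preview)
Your approach is essentially the same as the paper's: both argue that every hyperplane of $\tilde{Y}$ is compact and then invoke Manning's bottleneck criterion via the fact that any path from $x$ to $y$ must cross every hyperplane separating them. The paper's proof is a terse two sentences; you spell out the bottleneck argument in more detail and correctly isolate the compactness of hyperplanes as the substantive step.

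One comment on your justification for that step. The assertion that ``each hyperplane stabilizer acts cocompactly on its hyperplane'' is \emph{not} a general structural feature of Sageev's construction from finitely many codimension--1 subgroups; in general this requires knowing cocompactness of the ambient action, which is exactly what you are trying to avoid. You are right to flag this as the obstacle. The correct route---which you gesture at in your workaround and which the paper's one-line proof implicitly relies on---is to argue directly at the level of the wallspace: since each $K_i$ is finite, each wall has finite edge-coboundary in the Cayley graph of $P$, and each edge lies in the coboundary of only finitely many walls; from this one deduces that only finitely many walls can cross a given wall, so each hyperplane of $\tilde{Y}$ has only finitely many hyperplanes and is therefore compact. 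That combinatorial finiteness is the actual content behind the paper's phrase ``because each $K_i$ is a finite group.'' Once you have it, the uniform diameter bound follows (finitely many orbits, each compact), and your bottleneck argument goes through exactly as written.
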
 
	
	\begin{proof}
		Every hyperplane of $\tilde{Y}$ is compact because each $K_i$ is a finite group. Since any path between two points has to cross every (finite) hyperplane that a geodesic between the two points crosses, Manning's bottleneck criterion \cite{manning_bottleneck} is satisfied by $\tilde{Y}$. 
	\end{proof}
	
	\begin{proposition} 
		$P$ acts cocompactly on $\tilde{Y}$ (with finitely many hyperplane orbits)
	\end{proposition}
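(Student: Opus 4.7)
The plan is to address the two claims in order. The hyperplane-orbit count is immediate from the construction: walls of $\tilde{Y}$ are parametrized by the left cosets $\{gK_i : g\in P,\ 1\le i\le n\}$, on which $P$ acts with exactly $n$ orbits, and hyperplanes biject with walls.

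For cocompactness, I would study the canonical $P$-equivariant map $\iota\colon P \to \tilde{Y}^{(0)}$ sending $g$ to the principal ultrafilter $v_g$: for each wall $W$, $v_g$ picks the halfspace containing $g$. The image $\iota(P) = P\cdot v_1$ is a single $P$-orbit, so cocompactness at the level of vertices reduces to showing that $\iota(P)$ is cobounded in $\tilde{Y}$. Combined with the finitely many hyperplane orbits and finite dimensionality of $\tilde{Y}$ (which follows from $\tilde{Y}$ being a quasi-tree, hence Gromov hyperbolic, forbidding arbitrarily large collections of pairwise-crossing hyperplanes), this would bound the number of orbits of cubes of every dimension.

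Two structural observations should do most of the work for coboundedness. First, every hyperplane of $\tilde{Y}$ has compact carrier, because the underlying wall $gK_i$ is a finite subset of $\operatorname{Cay}(P)$ and its stabilizer is the finite group $gK_ig^{-1}$, up to a bounded extension. Second, each edge of $\operatorname{Cay}(P)$ is crossed by only a uniformly bounded number of walls, because a wall $gK_i$ separating the endpoints of an edge must meet a fixed-radius neighborhood of the edge, which pins $g$ down to finitely many choices. These facts together show that $\iota$ is a quasi-isometric embedding at the scale of a single step. To conclude coboundedness, I would take $v \in \tilde{Y}^{(0)}$ with a nearest principal vertex $v_g = \iota(g)$ at combinatorial distance $d$; a geodesic from $v_g$ to $v$ crosses $d$ distinct hyperplanes whose $v$-chosen halfspaces must pairwise intersect by consistency, while the quasi-tree bottleneck property combined with compactness of hyperplanes forces any such path to pass near some principal vertex $v_{g'}$ on the far side of one of the hyperplanes once $d$ exceeds a uniform bound, contradicting the minimality of $d$.

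The main obstacle will be making this last coboundedness step quantitative: translating the quasi-tree geometry of $\tilde{Y}$ into a sharp combinatorial bound on the DCC ultrafilters that can lie far from the principal orbit. An alternative worth trying first would be a blackbox from the cubulation literature, a statement in the flavor of ``a wallspace whose walls have finite carriers and lie in finitely many group orbits yields a cocompact action on the dual cube complex,'' which appears in implicit form in the foundational work of Sageev and Nica on wallspaces. If such a blackbox applies directly, the argument reduces to a one-line invocation; otherwise, the bottleneck-based coboundedness argument sketched above serves as the fallback.
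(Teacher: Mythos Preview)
Your proposal has the right ingredients but takes a detour around the direct argument. You correctly note that there are finitely many hyperplane orbits and that each hyperplane has compact carrier. The paper simply combines these two facts: a compact hyperplane is dual to only finitely many edges, so finitely many hyperplane orbits immediately gives finitely many edge orbits; together with finite dimensionality (which also follows from hyperplane compactness, since any hyperplane crosses at most finitely many others), this yields finitely many cube orbits. Your ``blackbox'' alternative is precisely this two-line argument---no citation to Sageev or Nica is needed.

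By contrast, your primary route through coboundedness of the principal-ultrafilter orbit is both unnecessary and, as written, not fully justified. The step where ``the quasi-tree bottleneck property combined with compactness of hyperplanes forces any such path to pass near some principal vertex $v_{g'}$'' does not follow: the bottleneck property controls where arbitrary paths pass relative to geodesic midpoints, but says nothing about the location of \emph{principal} vertices, which is exactly the coboundedness you are trying to establish. You could close this gap by observing that every vertex of $\tilde{Y}$ lies in the carrier of some hyperplane (being the endpoint of some edge), and that carriers are finite and fall into finitely many $P$-orbits---but that is again the paper's argument, making the bottleneck machinery superfluous.
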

	
	\begin{proof}
		The compactness of each hyperplane also implies that $\tilde{Y}$ is finite-dimensional, since any hyperplane can cross at most finitely many hyperplanes. Observe that this also means that there are finitely many orbits of edges: any edge is dual to a hyperplane (which is compact and thus dual to finitely many edges), and there are finitely many orbits of hyperplanes. 
	\end{proof}

	\begin{proposition}
		The one skeleton $\tilde{Y}^{(1)}$ of $\tilde{Y}$ is fine in the sense of Bowditch. 
	\end{proposition}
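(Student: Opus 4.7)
The plan is to show $\tilde{Y}^{(1)}$ is fine: for each edge $e$ with dual hyperplane $\hyp$ and each $n \geq 1$, only finitely many simple cycles of length $\leq n$ pass through $e$. The key tools are both consequences of each hyperplane being a finite cube complex: (i) each hyperplane has only finitely many dual edges in $\tilde{Y}$, one per $0$-cell of the hyperplane, and (ii) each hyperplane is crossed by only finitely many other hyperplanes, since these crossings correspond to hyperplanes of the finite hyperplane itself. Both are already noted in the preceding propositions.

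Using simple connectedness of $\tilde{Y}$, any simple cycle $c$ of length $\leq n$ through $e$ bounds a reduced disc diagram $D \to \tilde{Y}$ with boundary $c$, having at most $n/2$ dual curves each mapping to a distinct hyperplane and at most $\binom{n/2}{2}$ squares, each witnessing a crossing of two dual curves. Define inductively $\mc{F}_0 = \{\hyp\}$ and $\mc{F}_k = \mc{F}_{k-1} \cup \{\hyp' : \hyp' \text{ crosses some } \hyp'' \in \mc{F}_{k-1}\}$, which is finite at each step by tool (ii). I would then argue that every hyperplane appearing in $D$ lies in $\mc{F}_{\lfloor n/2 \rfloor}$, using that the dual curve crossing graph of a reduced disc diagram is connected: every dual curve is reachable from the one dual to $e$ through at most $n/2$ crossings in $D$, each of which corresponds to a crossing of the associated hyperplanes in $\tilde{Y}$. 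In particular the partner boundary edge $e'$ through which the dual curve of $e$ exits lies among the finitely many edges dual to $\hyp$.

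Finally, since the hyperplanes dual to edges of $c$ all lie in the finite set $\mc{F}_{\lfloor n/2 \rfloor}$ and each such hyperplane has finitely many dual edges by (i), the cycle $c$ is contained in a finite subgraph of $\tilde{Y}^{(1)}$, which contains only finitely many simple cycles through $e$. The main obstacle is verifying the connectivity of the dual curve crossing graph in a reduced disc diagram, so that the iterated crossing neighborhood $\mc{F}_{\lfloor n/2 \rfloor}$ genuinely captures every hyperplane appearing. This is a standard but delicate combinatorial fact: the connectedness of $D$ together with the reducedness condition (no dual curve self-intersection and no pair of dual curves crossing twice) forces the dual curves to link up via crossings. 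Executing this argument carefully is the crux of the proof.
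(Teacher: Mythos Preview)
Your approach is correct and genuinely different from the paper's. The paper argues via hyperbolicity: since $\tilde{Y}$ is a quasi-tree it satisfies a linear isoperimetric inequality, so circuits of length $r$ bound disc diagrams of bounded area; combined with cocompactness of the $P$--action (finitely many orbits of $2$--cells) and finiteness of edge stabilizers, this gives only finitely many disc diagrams with $e$ on the boundary, hence finitely many circuits. Your argument bypasses hyperbolicity and the group action entirely, using only that hyperplanes are finite cube complexes: you trap every hyperplane appearing in the filling diagram inside an iterated crossing-neighbourhood $\mc{F}_{\lfloor n/2\rfloor}$ of $\hyp$, and then observe that the circuit lives in the finite union of carriers of those finitely many hyperplanes. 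This is more elementary and more intrinsic to the cubical structure; the paper's route is shorter to state but leans on more background.

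Two small points. First, the assertion that the $\le n/2$ dual curves map to \emph{distinct} hyperplanes is not needed and is not always true (two non-crossing dual curves in $D$ can map to the same hyperplane of $\tilde{Y}$); you never use distinctness, so just drop that word. Second, the connectivity of the dual-curve crossing graph does not follow from reducedness and connectedness of $D$ alone---a singular disc diagram (two squares joined at a vertex) is connected and reduced but has disconnected crossing graph. What you actually need is that $D$ is a \emph{non-singular} disc, which you get because $c$ is an embedded circuit: any boundary cut point of $D$ would force the boundary path to revisit a vertex, contradicting embeddedness. Once $D$ is a genuine topological disc, any two squares sharing an edge share a dual curve, and the link of every interior vertex is a circle, so the squares are edge-connected and hence all dual curves lie in one crossing-component. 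Making this step explicit closes the gap you flagged.
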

	
	\begin{proof}
		% Since $\tilde{Y}$ is finite dimensional and the hyperplanes are compact, each hyperplane crosses a finite number of hyperplanes. Similarly, every hyperplane intersects finitely many squares. Let $W_1,\ldots,W_n$ be the hyperplanes stabilized by $K_1,\ldots,K_n$. If $S$ is a square of $\tilde{Y}$, $S$ intersects two hyperplanes $W_{S_1}$, $W_{S_2}$, so $W_{S_1} = gW_i$ for some $W_i$ and $g\in P$. Thus $S$ is a translate of one of the finitely many squares that intersect $W_i$. 
		Let $r\in \Z^{>0}$, and let $e$ be a fixed edge of $\tilde{Y}^{(1)}$. 
		Since $\tilde{Y}$ is hyperbolic, $\tilde{Y}^{(2)}$ satisfies a linear isoperimetric inequality.
		Since there are only finitely many orbits of $2$--cells, there are finitely many orbits of disk diagrams of perimeter $r$ for $\tilde{Y}$.
		A circuit of length $r$ that contains $e$ encloses a disk diagram. There are only finitely many such disk diagrams, so there can be only finitely many circuits of length $r$ that contain $e$. 
		% Therefore $e$ is finite. 
	\end{proof}
	
	Since $\tilde{Y}^{(1)}$ is a fine hyperbolic graph with finite edge stabilizers and finitely many edge orbits, $\tilde{Y}^{(1)}$ satisfies \cite[Definition 3.4 (RH-4)]{Hruska2010} and $P$ is therefore relatively hyperbolic relative to the infinite vertex stabilizers. 
	Note that in the case that all vertex stabilizers are finite then the action is also proper, so $P$ is virtually free.
\end{proof}

\section{Refining to an elliptic peripheral structure}\label{Sec: endgame}

Our main technical result is \Cref{T: refined action} which explains how we can use \Cref{T: the finer structure} to refine the peripherals of $G$ so that hyperbolic elements in the refined peripheral structure act loxodromically on the dual cube complex $X$ and the refined peripherals will act elliptically on $X$. 
\begin{theorem}\label{T: refined action}
	Let $(G,\mc{P})$ be relatively hyperbolic and let $X$ be the cube complex dual to a finite collection $\walls$ of full relatively quasiconvex (with respect to $(G,\mc{P})$) subgroups. 
	Then each $P\in\mc{P}$ is either virtually free or hyperbolic relative to a collection $\mc{Q}_P$ of subgroups so that
	\begin{enumerate}
		\item each $Q\in \mc{Q}_P$ acts elliptically on $X$ and
		\item if $g\in P$ is hyperbolic in $(G,\mc{Q})$, then $g$ satisfies axis separation for $\walls$. 
	\end{enumerate}
	% In addition, if $P$ acts elliptically on $X$, we can take $\mc{Q}_P = \{P\}$. 
\end{theorem}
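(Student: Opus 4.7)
The plan is to apply \Cref{T: the finer structure} separately to each $P \in \mc{P}$, using hyperplane stabilizers to build the input collection of finite codimension--$1$ subgroups. Since each $H \in \walls$ is $\mc{P}$-full and relatively quasiconvex, \cite[Theorem~7.12]{HruskaWise14} gives a $P$-invariant convex subcomplex $C_P \subseteq X$ on which $P$ acts cocompactly; fix a basepoint $v_0 \in C_P$. By $\mc{P}$-fullness, the stabilizer $P \cap \Stab_G(\hyp)$ of any hyperplane $\hyp$ of $X$ is either finite (\emph{type A}) or finite-index in $P$ (\emph{type C}). Cocompactness yields finitely many $P$-orbits of hyperplanes crossing $C_P$: finitely many type A hyperplanes up to $P$-conjugacy, producing finite codimension--$1$ subgroups $K_1, \ldots, K_n$ of $P$ (each such hyperplane separates the $P$-orbit of $v_0$ into two deep halves); and each type C orbit has finite size, so the total number $N_C$ of type C hyperplanes crossing $C_P$ is finite. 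Applying \Cref{T: the finer structure} to $(P;K_1,\ldots,K_n)$ yields the quasi-tree $\tilde{Y}_P$ and the peripheral collection $\mc{Q}_P$.

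The heart of the argument is a distance comparison between $C_P$ and $\tilde{Y}_P$ via the orbit maps $p \mapsto pv_0$ and $p \mapsto p \cdot o_e$, where $o_e \in \tilde{Y}_P$ is the principal orientation at the identity. Sageev's construction identifies walls of $\tilde{Y}_P$ with $P$-translates of type A hyperplanes of $C_P$, with matching halfspace sides under the orbit map. So the walls of $\tilde{Y}_P$ separating $p_1 \cdot o_e$ and $p_2 \cdot o_e$ are exactly the type A hyperplanes of $C_P$ separating $p_1 v_0$ and $p_2 v_0$, while the remaining hyperplanes of $C_P$ separating these two points are type C (bounded in number by $N_C$). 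This gives
\begin{equation*}
d_{\tilde{Y}_P}(p_1 \cdot o_e,\, p_2 \cdot o_e) \;\le\; d_{C_P}(p_1 v_0,\, p_2 v_0) \;\le\; d_{\tilde{Y}_P}(p_1 \cdot o_e,\, p_2 \cdot o_e) + N_C.
\end{equation*}

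For (1), given $Q = \Stab_P(o)$ with $o \in \tilde{Y}_P$, the identity $d_{\tilde{Y}_P}(q \cdot o_e, o) = d_{\tilde{Y}_P}(o_e, o)$ for all $q \in Q$ shows that $Q \cdot o_e$ has diameter at most $2 d_{\tilde{Y}_P}(o_e, o)$ in $\tilde{Y}_P$, and the comparison above transfers this to a diameter bound on $Q \cdot v_0 \subseteq C_P \subseteq X$, so $Q$ acts elliptically on $X$. For (2), if $g \in P$ is hyperbolic in $(G, \mc{Q})$ then $g$ is hyperbolic in $(P, \mc{Q}_P)$ and acts loxodromically on the fine hyperbolic quasi-tree $\tilde{Y}_P$; the lower bound in the comparison forces $\langle g \rangle \cdot v_0$ to be unbounded in $X$, so \Cref{prop:semisimple} produces a hyperplane $\hyp$ of $X$ skewered by some power $g^k$ and hence by $g$, and \Cref{L: skewering separates axes} then gives axis separation for $\walls$ because $\Stab_G(\hyp)$ is a $G$-conjugate of some $H \in \walls$. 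The main technical work in carrying this out will be making the wall correspondence between $\tilde{Y}_P$ and the type A structure of $C_P$ rigorous (and verifying that the type A subgroups $K_i$ really are codimension--$1$ in $P$) by identifying the halfspaces of walls of $(P, \{K_i\})$ with the halfspaces of the corresponding type A hyperplanes under the orbit map.
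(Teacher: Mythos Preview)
Your approach is correct and reaches the same conclusion, but it is organized differently from the paper's argument. The paper does not pass through the Hruska--Wise convex subcomplex $C_P$ at all; instead it works directly with walls in $G$. It first shows (\Cref{L: essential walls exist}) that there are only finitely many $P$--orbits of cosets $gH_i$ that cut $P$ essentially, takes $\tilde{Y}$ to be the dual cube complex to those walls in $P$, and then proves two separate statements: \Cref{P: new refined loxo} (hyperbolic on $\tilde{Y}$ implies skewering in $X$, hence axis separation) via a direct skewer-set and dimension argument, and \Cref{P: new refined elliptics} (vertex stabilizers of $\tilde{Y}$ are elliptic on $X$) by contradiction, using the converse implication \Cref{L: lox X implies Y}.

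Your route is more geometric and unified: the single additive distance comparison between orbits in $C_P$ and in $\tilde{Y}_P$ simultaneously yields both conclusions, since bounded orbits transfer in one direction and unbounded orbits in the other. What this buys you is a cleaner conceptual picture (the two cube complexes differ only by the finitely many type~C hyperplanes). What the paper's approach buys is that it sidesteps two points you flagged as ``main technical work'': it never needs to check that every type~A hyperplane crossing $C_P$ gives an essential wall in $P$ (it simply restricts to those walls that cut $P$ essentially by definition), and it never needs the $P$--cocompactness of $C_P$. If you pursue your approach, you will want to either restrict to the essential type~A hyperplanes and bound the inessential ones separately, or pass to the essential core of $C_P$; either fix is routine once $P$--cocompactness on $C_P$ is in hand.
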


If each $P$ acts elliptically on $X$, \Cref{T: refined action} follows immediately, so for the remainder of this section, we assume that a given $P$ does \emph{not} act elliptically on $X$. 

We now introduce some notation:
\begin{definition}
	Let $K\le G$. A \textbf{$K$--wall} of $G$ is a pair of $K$--invariant subsets $K^+,K^-\subseteq G$ so that $G = K^+\cup K^-$, $K^+,K^-$ are unions of infinitely many right cosets of $K$ and $K^+\cap K^-$ is a union of finitely many right cosets of $K$.
	% \footnote{Suraj: Why right cosets and not left? Is $K$-invariance considered under the usual (left) action? If so, isn't any right coset always invariant? \teddy{The invariance is w/r/t the standard left action, so that each $K^+,K^-$ must be unions of right cosets. i.e. if $x\in K^+$, then so is the right coset $Kx$. Thus the intersections must be unions of right cosets as well. }}
\end{definition}

\begin{lemma}
	\label{claim:finite-dimensional}
	Under the hypotheses of \cref{T: refined action}, the cube complex $X$ is finite dimensional.
\end{lemma}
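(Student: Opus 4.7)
The plan is to reduce finite-dimensionality of $X$ to the bounded packing (equivalently, bounded width) property for the finite collection $\walls$ of full relatively quasiconvex subgroups, which is a standard feature of this setting. Concretely, I would show $\dim X$ is bounded by a constant depending only on $\walls$ and the relatively hyperbolic structure $(G, \mc P)$.

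First, I would recall that the dimension of the dual cube complex $X$ equals the maximum cardinality of a collection of pairwise crossing walls. In the construction of $X$ from $\walls$, each wall is determined by a pair $(gH, \text{halfspace})$ with $H \in \walls$ and $g \in G$, so (up to the finite halfspace choice and up to the finite set $\walls$) the walls are parameterized by cosets of subgroups in $\walls$.

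Next, I would translate wall-crossing into a combinatorial statement about subgroup intersections. Specifically, if $k$ walls pairwise cross, then the associated conjugate subgroups $g_1 H_{i_1} g_1^{-1}, \ldots, g_k H_{i_k} g_k^{-1}$ must have pairwise ``interlocking'' limit sets in the Bowditch boundary $\boundary_{\mc P}G$. This step is carried out in \cite[Section~5]{BergeronWise} in the hyperbolic case and extended to the relatively hyperbolic case in \cite{HruskaWise14}.

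Finally, I would invoke bounded packing / finite width for finite collections of full relatively quasiconvex subgroups of a relatively hyperbolic group, due to Hruska and Wise \cite{HruskaWise14}: there is a universal constant $D$ bounding the number of distinct conjugates of elements of $\walls$ whose limit sets can pairwise interlock. This yields $k \le D$ and hence $\dim X \le D$. The main technical hurdle is the implication ``pairwise crossing walls imply pairwise interlocking limit sets,'' but this is standard in the cubulation literature and available off-the-shelf from \cite{HruskaWise14} or \cite{RelCannon}, so the lemma should reduce to a short citation-based argument rather than an original calculation.
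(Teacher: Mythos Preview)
Your overall destination is the same as the paper's---invoke Hruska--Wise bounded packing to bound the number of pairwise crossing walls---but your route is more circuitous and glosses over the one place where the hypothesis actually does work.

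The paper does not pass through limit sets at all. It cites directly the criterion of Hruska--Wise (\cite[Theorem~3.30]{HruskaWise14} and \cite[Theorem~8.10]{HruskaWise:packing}): the dual cube complex is finite dimensional provided that for each peripheral $P\in\mc{P}$ and each hyperplane stabilizer $H$, the intersection $H\cap P$ has bounded packing \emph{in $P$}. It then verifies this peripheral condition using fullness: $H\cap P$ is either finite (bounded packing is trivial by local finiteness of the Cayley graph) or finite index in $P$ (bounded packing holds with $k=[P:H\cap P]$). That is the entire argument, and it makes transparent exactly why fullness is needed.

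Your proposal instead inserts an intermediate step, translating pairwise crossing of walls into ``pairwise interlocking limit sets'' in $\boundary_{\mc P}G$, and then bounding the number of such. Two issues: first, this detour is unnecessary, since the Hruska--Wise results already give finite-dimensionality directly from bounded packing without any boundary argument. Second, the claim that the interlocking-limit-set step is ``available off-the-shelf'' in the relatively hyperbolic setting is optimistic; the boundary behaves differently at parabolic points than in the hyperbolic case, and the relevant Hruska--Wise theorems are phrased in terms of bounded packing in peripherals, not in terms of limit sets. You also never say where fullness enters---but that is precisely the hypothesis that makes the peripheral bounded packing condition hold, and without it the Hruska--Wise criterion does not apply.

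In short: drop the limit-set paragraph, cite the Hruska--Wise finite-dimensionality criterion directly, and add the one-line verification that fullness forces $H\cap P$ to be finite or finite index in $P$, hence to have bounded packing in $P$.
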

\begin{proof}
	% \suraj{If we want to shorten this, we could skip defining bounded packing and refer to the relevant results directly. Optional.}
	By construction, $X$ has finitely many $G$-orbits of hyperplanes.  
	A subgroup $A \leq B$ is said to have the \emph{bounded packing property} in $B$ if for every $d > 0$ there exists a $k$ such that any collection of $k+1$ distinct left cosets of $A$ contains a pair whose $d$-neighborhoods are disjoint where distance is measured in the Cayley graph of $B$ with respect to a finite generating set.  
	Hruska and Wise \cite[Theorem~3.30]{HruskaWise14} and 
	\cite[Theorem~8.10]{HruskaWise:packing} 
	prove that 
	% General statement here:
	% for any $(G, \mc{P})$ finitely generated relatively hyperbolic pair with finitely many relatively quasiconvex codimension-1 subgroups $H_1, \dots, H_k$, if each $H_i$ satisfies the bounded packing property then CAT(0) cube complex dual to the $G$-translates of the $H_i$ is finite dimensional. 
	if for each peripheral subgroup $P \in \mc{P}$ the intersection with any hyperplane stabilizer has the bounded packing property in $P$ then $X$ is finite dimensional.

	Finite subgroups clearly have the bounded packing property because the Cayley graph is locally finite, so $k$ may be taken to be the size of the $d$-ball in $G$.
	
	Hypothesis (\ref{item:full}) gives that each hyperplane stabilizer $H_i^g$ is full with respect to the peripheral structure $\mc P$. 
	Hence, if $|H_i^g\cap P| < \infty$, we have already seen that they have bounded packing.
	Otherwise, $H_i^g\cap P$ is finite index in $P$, but then bounded packing holds by taking $k = [P : H_i^g\cap P]$.
	\qedhere
\end{proof}

The cube complex $X$ is dual to a collection of $H_i$--walls $H_i^+,H_i^-$ in $G$.
We say that $P$ is \textbf{deep} in $gH_i^{\pm}$ if $\leftQ{gH_i^\pm\cap P}{H_i^g\cap P}$ is infinite.
% \footnote{S: I am used to thinking of the geometric notion of deep: that the orbit of $P$ is arbitrarily far from $H$ on both sides. Is this the same notion? \teddy{I think that is sufficient but you actually want it technically to be arbitrarily far in Schreier (right) coset graph distance}}
We say that $gH_i$ \textbf{cuts $P$ essentially} if $P$ is deep in both $gH_i^+$ and $gH_i^-$. 
Since $H_i$ is full, $H_i^g\cap P$ is either finite index in $P$ or finite. If $H_i^g\cap P$ is finite index in $P$, the orbit of $gH$ under $P$ is coarsely equal to $gH$. Therefore, $H_i^g\cap P$ is finite whenever $H_i^g$ cuts $P$ essentially.

\begin{lemma}
	\label{claim:codim one intersections}
	If $P \in \mc{P}$ has unbounded orbits in $X$ then
	$P$ intersects a hyperplane stabilizer $H$ in a finite codimension-1 subgroup of $P$. In other words, if $P$ does not act elliptically on $X$, there is at least one $H_i^g$ that cuts $P$ essentially.
\end{lemma}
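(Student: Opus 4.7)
The plan is to combine the semisimplicity Proposition~\ref{prop:semisimple} with the skewering picture to produce the required codimension--1 intersection. Since $P$ is finitely generated (as a peripheral subgroup of a finitely generated relatively hyperbolic group) and $X$ is finite dimensional by \cref{claim:finite-dimensional}, the hypothesis that $P$ acts with unbounded orbit allows me to apply \cref{prop:semisimple} to $P$, producing an element $g \in P$, a hyperplane $\hyp$ of $X$, and a halfspace $\hyp_+$ with $g\hyp_+ \subsetneq \hyp_+$. Let $S := \Stab(\hyp)$, a conjugate $H^{g_0}$ of some $H \in \walls$. I claim that $F := S \cap P$ is the desired finite codimension--1 subgroup.

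Finiteness first. By hypothesis (\ref{item:full}), $H$ is $\mc P$-full, so $F$ is either finite or of finite index in $P$. If it were of finite index, the $P$-orbit of $\hyp$ would be finite, so some positive power $g^k$ would stabilize $\hyp$; but this contradicts the strictly nested chain $\cdots \subsetneq g^2\hyp_+ \subsetneq g\hyp_+ \subsetneq \hyp_+$. Hence $F$ is finite.

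For codimension--1, I restrict the $S$-wall $(S^+,S^-)$ of $G$ to $P$ by setting $P^\pm := S^\pm \cap P$; these are $F$-invariant and cover $P$. Because $Sp \cap P = (S\cap P)p = Fp$ for any $p\in P$, intersecting the finitely many right $S$-cosets making up $S^+ \cap S^-$ with $P$ shows that $P^+ \cap P^-$ is a finite union of right $F$-cosets. To produce infinitely many right $F$-cosets in each of $P^\pm$, note that $g$ has infinite order (otherwise the nested chain $\{g^n\hyp_+\}$ would stabilize) and $F$ is finite, so the right cosets $\{Fg^n\}_{n\in\integers}$ are pairwise distinct; the skewering dynamics then force $g^n \in S^+$ for $n\gg 0$ and $g^n \in S^-$ for $n\ll 0$, yielding infinitely many right $F$-cosets on each side. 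This exhibits the required $F$-wall in $P$.

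I expect the main obstacle to be the codimension--1 half rather than finiteness: one has to verify that the restricted wall is nontrivial on both sides, and ruling out the degenerate possibility that all but finitely many $F$-cosets lie on a single side. This is exactly why the strict containment $g\hyp_+\subsetneq\hyp_+$ from \cref{prop:semisimple}, rather than merely the existence of an element with $gH \ne H$, is the crucial input: it forces the $g$-axis to traverse $\hyp$ and spreads the cosets $\{Fg^n\}$ into both halfspaces.
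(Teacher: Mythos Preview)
Your proof is correct and follows essentially the same approach as the paper's: both invoke finite-dimensionality (\Cref{claim:finite-dimensional}) and semisimplicity (\Cref{prop:semisimple}) to produce a skewering element $g\in P$, use fullness to rule out the finite-index alternative, and then use powers of $g$ to witness that the restricted wall is deep on both sides. Your write-up simply spells out in more detail the restriction of the $S$-wall to $P$ and the coset-counting that the paper summarizes in a single sentence.
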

\begin{proof}
	% [Proof of \Cref{claim:codim one intersections}]
	Since $X$ is finite dimensional by \Cref{claim:finite-dimensional},  \Cref{prop:semisimple} guarantees that there exists an infinite order element $g \in P$ and halfspace $\hyp_+$ such that $g\hyp_+ \subsetneq \hyp_+$. 
	In particular, the orbit of $P$ escapes any finite neighbourhood of $\hyp$.
	Let $H = \Stab_G(\hyp)$ be the hyperplane stabiliser.  
	By the fullness assumption, $H \cap P$ is a finite subgroup of $P$ and translates by powers of $g$ witness that $H\cap P$ has two deep complementary components. 
	Indeed, if $H \cap P$ were finite index in $P$ then there would be only finitely many $P$-translates of $\hyp$ which we have already seen to be false.
\end{proof}

\begin{lemma}
	\label{L: essential walls exist}
	Assume that every element of $\mc{P}$ is finitely generated. 
	There exists $R>0$ so that whenever $gH_i$ cuts $P\in\mc{P}$ essentially, there is a $p\in P$ so that $d(p\inv gH_i,1_G)<R$. 
\end{lemma}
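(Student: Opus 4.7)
The plan is a path-lifting argument in the Cayley graph of $P$. I would first fix a finite generating set $S_G$ for $G$ and, using finiteness of $\mc P$ together with the finite generation hypothesis, a finite generating set $S_P$ for each $P \in \mc P$; let $M = \max\{|s|_G : P \in \mc P,\, s \in S_P\}$. The Sageev-style construction of the $H_i$-walls ensures that each thickening $H_i^+ \cap H_i^-$, being a finite union of $H_i$-cosets determined by $S_G$-edges out of $H_i$, is contained in a uniform $G$-neighborhood of $H_i$ of some radius $D$; this is the second constant I will use.

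Now suppose $gH_i$ cuts $P$ essentially. The first substantive step is to extract witnesses that sit strictly on each side of the wall. Since $gH_i^+ \cap gH_i^-$ is a finite union of $H_i$-cosets in $G$, its intersection with $P$ is a finite union of $(H_i^g \cap P)$-cosets in $P$. The deepness hypothesis provides infinitely many $(H_i^g \cap P)$-cosets in $gH_i^\pm \cap P$, so there exist $p_\pm \in P \cap (gH_i^\pm \setminus gH_i^\mp)$. I would then connect $p_+$ to $p_-$ by a path $q_0, \ldots, q_k$ in $\Cay(P, S_P)$ and take the largest index $j$ with $q_j \in gH_i^+$; the consecutive vertices $q_j, q_{j+1}$ then lie in opposite strict halfspaces of the wall and satisfy $d_G(q_j, q_{j+1}) \le M$.

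The last step is a crossing argument in $\Cay(G, S_G)$: any $G$-path of length at most $M$ from an element strictly in $gH_i^+$ to one strictly in $gH_i^-$ must visit the thickening $gH_i^+ \cap gH_i^-$, hence pass within $G$-distance $D$ of $gH_i$. Consequently $q_j$ is within $G$-distance $M + D$ of $gH_i$, and setting $p = q_j \in P$ and $R = M + D$ yields $d(p^{-1}gH_i, 1_G) = d_G(p, gH_i) \le R$. Since $M$ and $D$ depend only on the finitely many chosen pairs $(P, S_P)$ and $H_i$-walls, this $R$ is uniform.

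The main delicacy I anticipate is justifying that the thickening is in fact contained in a bounded $G$-neighborhood of $H_i$ and that consecutive $S_G$-edges cannot jump from strict $gH_i^+$ to strict $gH_i^-$ without entering the thickening. I would handle both by unpacking the specific wall construction used to define $X$ in \cite{BergeronWise, RelCannon}, where the thickening is built from the finitely many $H_i$-cosets reachable from $H_i$ by a single $S_G$-step; this rules out edges between the two strict halfspaces and gives the required uniform radius $D$.
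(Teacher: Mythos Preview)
Your proposal is correct and follows essentially the same connectivity argument as the paper: both fix a finite generating set for each $P$, view $P$ as a connected subgraph of the Cayley graph of $G$, and argue that a path in this subgraph joining the two deep sides must come close to the thickening $gH_i^+\cap gH_i^-$, which in turn lies within a uniform neighborhood of $gH_i$. Your version is in fact more explicit than the paper's---you locate a specific crossing edge and correctly flag the separation property of the thickening, which the paper invokes only implicitly when it asserts that $gH_i^+\cap gH_i^-$ ``must cut $\Gamma_P$ into multiple deep components.''
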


\begin{proof} 
	Fix a generating set $S_P$ for each $P\in \mc{P}$. 
	There are finitely many elements in $\mc{P}$, so there exists a $k>0$ so that $k$ exceeds the length in $G$ of any element of $\bigcup_{P\in\mc{P}}S_P$. 
	Choose $R_0$ so that:
	\[R_0> \max_i (\#\text{ of right cosets of $H_i$ in } H_i^+\cap H_i^-).\]
	We can view $P$ as a connected subgraph $\Gamma_P$ of the Cayley graph of $G$ where any point in $|\Gamma_P|$ is at most distance $k$ from a vertex of $\Gamma_P$ representing an honest element of $P$.  
	For $P$ to be deep in both of $gH_i^\pm$, there is some $x\in \Gamma_P$ so that $d(x, gH_i)<R_0$ because $gH_i^+\cap gH_i^-$ must cut $\Gamma_P$ into multiple deep components. Hence $d(P,gH_i)<R_0+k$. Letting $p\in P$ be a point that realizes this inequality, we then use the left invariance of the metric  
	to obtain $d(1_G,p\inv gH_i)<R_0+k$. Hence $R =R_0+k$ suffices.
	% \footnote{\teddy{Teddy: Suraj, your original note is commented out here. We were forgetting that in the Cayley graph a subgroup might have its vertices be very far apart. The finite index concern is less of an issue here because we have a finite number of peripherals to choose from}}%\footnote{S: If $P$ is cut into deep components, then so is any finite index subgroup of $P$, for any arbitrary index. But for such subgroups, the conclusion doesn't hold as elements of $P$ could be arbitrarily far in either halfspace while still being very far from the coset $gH_i$. What am I missing? If I am not wrong about this, then there won't be a uniform $R$ that works for Lemma 5.5, but there should still be a number which works for each $P$, and that is enough for our purposes.}
\end{proof}

In light of \Cref{L: essential walls exist}, we conclude that there are finitely many $P$-orbits of $gH_i$ that cut $P$ essentially because there are finitely many left cosets that intersect the $R$-neighborhood of the identity. Let $\mc{H}_P$ be a (finite) collection of $P$--orbit representatives of $gH_i$ that cut $P$ essentially. By \Cref{claim:codim one intersections}, $\mc{H}_P$ is non-empty. 

Let $\tilde{Y}$ be the cube complex dual to the finite collection of walls
% \footnote{I prefer the more standard terminology of ``dual to the finite collection of codimension-1 subgroups/walls'', instead of halfspaces. Appears earlier too. \teddy{changed to walls}} 
of $P$ corresponding to $gH_i\in \mc{H}_P$.  By \cref{T: the finer structure}, $P$ is hyperbolic relative to a (finite) collection $\mc{Q}_P$ of $P$--conjugacy representatives for infinite vertex stabilizers of $\tilde{Y}$. 

\begin{lemma}
	\label{L: lox X implies Y}
	Suppose that $h \in P$ acts hyperbolically on $X$.  
	Then $h$ acts hyperbolically on $\tilde{Y}$.
\end{lemma}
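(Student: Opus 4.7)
My plan is to reduce the lemma to showing that $h$ has unbounded orbit on $\tilde Y$. Once this is established, the finite-dimensionality of $\tilde Y$ (immediate from the proof of \cref{T: the finer structure}, since the compactness of each hyperplane bounds the cube dimension) combined with \cref{prop:semisimple} applied to the cyclic subgroup $\langle h\rangle$ yields a power $h^m$ that skewers a hyperplane of $\tilde Y$, so $h$ is hyperbolic on $\tilde Y$.

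First, I would use that $h$ is hyperbolic on $X$ to extract a hyperplane $\hyp$ of $X$ that $h$ skewers: there exist $H_i \in \walls$, $g\in G$, and $n\in \naturals$ with $h^n gH_i^+ \subsetneq gH_i^+$. Next I would verify that $gH_i$ essentially cuts $P$, so that it contributes a hyperplane $\tilde\hyp$ to $\tilde Y$. The $\mc{P}$-fullness of $H_i$ leaves only two possibilities for $H_i^g \cap P$: finite or finite-index. I would rule out the finite-index case by a geometric argument in $X$: if $P = F\cdot(H_i^g\cap P)$ with $F$ finite, then since each element of $H_i^g$ preserves $\hyp$ setwise and acts by isometries, the orbit $P\cdot v_0$ stays in a bounded neighborhood of $\hyp$, contradicting that $h\in P$ skewers $\hyp$ and therefore pushes $v_0$ arbitrarily far from $\hyp$. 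With $H_i^g\cap P$ finite, the iterates $h^k$ give infinitely many $(H_i^g\cap P)$-cosets lying deep in each of $gH_i^\pm$, confirming that $gH_i$ essentially cuts $P$.

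Then I would show $h$ has unbounded orbit on $\tilde Y$ by producing distinct translates of $\tilde\hyp$ separating $v_0$ from $h^{kn}v_0$. The halfspaces $\{h^{jn}gH_i^+\}_{j\in\integers}$ form a strictly nested chain in $G$, and the cosets $\{h^{jn}gH_i\}_{j\in\integers}$ are pairwise distinct because $h^{jn}gH_i=h^{j'n}gH_i$ would force $h^{(j-j')n}\in H_i^g\cap P$, impossible since $h$ has infinite order and $H_i^g\cap P$ is finite. An axis-style count in $X$ shows that for $j$ in an interval of length $k$, the hyperplane $h^{jn}\hyp$ separates $v_0$ from $h^{kn}v_0$ in $X$; pushing this through the restriction-to-$P$ correspondence yields $k-O(1)$ distinct hyperplanes of $\tilde Y$ separating the corresponding vertices there, so $d_{\tilde Y}(v_0,h^{kn}v_0)\to\infty$.

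The main obstacle is ruling out $H_i^g\cap P$ being finite-index in the first step. This is slightly delicate because the orbit map $G\to X$ need not be a quasi-isometric embedding (cell stabilizers can be infinite), so one cannot simply transport Cayley-graph distances into $X$-distances. Instead the argument must proceed directly in $X$ using that $H_i^g$ stabilizes $\hyp$ to keep $H_i^g$-orbits near $\hyp$, and then invoke that $P$ would be a finite extension of $H_i^g\cap P$ to extend this bound to all of $P$.
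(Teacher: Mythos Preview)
Your argument is correct and follows essentially the same line as the paper: pick a hyperplane $\hyp$ of $X$ skewered by $h$, verify that the associated wall cuts $P$ essentially so that it defines a hyperplane of $\tilde Y$, and then exploit the nested chain of translates $\{h^{jn}\hyp\}_j$. The paper's proof is more direct at two points. First, ruling out $[P:H_i^g\cap P]<\infty$ is simpler than you make it: finite index would force the $P$-orbit of $\hyp$ to be finite, but the strictly nested halfspaces $h^{jn}\hyp^+$ already show the $h^{jn}\hyp$ are pairwise distinct. Second, once you have the strictly nested chain $h^{jn}gH_i^+$ in $G$, restricting to $P$ gives a strictly nested chain of halfspaces in $\tilde Y$, so $h$ (or a power) skewers a hyperplane of $\tilde Y$ directly; the paper concludes here without passing through unbounded orbits and \cref{prop:semisimple}. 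Your detour via unbounded orbits works, but the intermediate appeal to semisimplicity is unnecessary since the skewering in $\tilde Y$ is already in hand.
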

\begin{proof}
	An isometry $h$ of $X$ is hyperbolic if and only if it skewers some hyperplane \cite[Lemma~2.4]{CapraceSageev}, and any skewered hyperplane determines a disjoint \emph{skewer set} $sk(h) = \{ h^{nk}\hyp \mid k \in \Z \}$  (see for example Kar and Sageev \cite{KarSageev-UEG}).
	We see from this that $P$ is deep with respect to any hyperplane skewered by $h$, so each hyperplane $\frak{h} \in sk(h)$ corresponds to a coset in $P\mc{H}_P$.
	% \footnote{S: What is $P\mc{H}_P$? TN: $P\mc{H}_P$ is $P$ translates of the cosets in $\mc{H}_P$ (so they are again cosets)}
	Indeed, disjoint hyperplanes are at distance at least 1 and separating.
	By construction, if two hyperplanes of $P\mc{H}_P$ have nested halfspaces in $X$, then the corresponding halfspaces are nested in $\tilde{Y}$. 
	Thus, $h$ is hyperbolic on $\tilde{Y}$.
\end{proof}

\begin{proposition}\label{P: new refined loxo}
	Let $h\in P$ act as a hyperbolic isometry on $\tilde{Y}$. Then $h$ satisfies the axis separation condition for $\walls$ and therefore acts hyperbolically on $X$.
\end{proposition}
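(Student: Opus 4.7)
The plan is to show that $h$ skewers a hyperplane in $X$; the axis separation condition then follows immediately by passing to the complementary halfspace, and the hyperbolicity of $h$ on $X$ is a formal consequence of skewering.

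First I would prove that $\langle h\rangle$ has unbounded orbit in $X$. Fix basepoints $v_0 \in X$ and $v_0^Y \in \tilde Y$ both corresponding to $1_G$. Every hyperplane of $\tilde Y$ is the restriction to $P$ of some $G$-wall $(bH_+, bH_-)$, where $bH$ is a $P$-translate of a coset in $\mc{H}_P$ (so $H \in \walls$). Because $1_G$ and $h^k$ both lie in $P$, the $P$-wall $(bH_+\cap P, bH_-\cap P)$ separates them in $P$ precisely when the ambient $G$-wall $(bH_+, bH_-)$ separates them in $G$, which in turn corresponds to a hyperplane in $X$ separating $v_0$ from $h^k v_0$. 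Counting these walls gives
\[
d_X(v_0, h^k v_0) \ge d_{\tilde Y}(v_0^Y, h^k v_0^Y),
\]
and the right-hand side diverges since $h$ is hyperbolic on the quasi-tree $\tilde Y$.

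Since $X$ is finite-dimensional by \cref{claim:finite-dimensional}, I would then apply \cref{prop:semisimple} to $\langle h\rangle$ to produce an integer $m$ and a halfspace $\hyp^+$ of some hyperplane $\hyp \subset X$ with $h^m \hyp^+ \subsetneq \hyp^+$. By construction $\hyp$ is dual to a coset $gH$ with $H \in \walls$ and $g \in G$, so $\hyp^+$ is one of $gH_\pm$. Applying $h^{-m}$ to the strict inclusion and, if necessary, replacing $\hyp^+$ by its complementary halfspace, one gets a strict double inclusion
\[
h^{-n}H^g_+ \subsetneq H^g_+ \subsetneq h^n H^g_+
\]
with $n > 0$, which is precisely the axis separation condition for $\walls$. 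In particular $h$ skewers a hyperplane of $X$ and so acts hyperbolically.

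The main technical point I expect to verify carefully is the wall-lifting observation in the first paragraph: the walls used to build $\tilde Y$ are literally restrictions of $G$-walls used to build $X$, so separation by a wall of $\tilde Y$ automatically lifts to separation by the corresponding hyperplane of $X$. Once this comparison is set up, the rest is a direct application of the semisimple proposition together with elementary bookkeeping on halfspace orientations.
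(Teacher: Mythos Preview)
Your argument is correct. Both you and the paper ultimately establish that $h$ skewers a hyperplane of $X$ and then read off axis separation, but the mechanisms for finding the skewered hyperplane differ.

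The paper works more explicitly: it takes a hyperplane $\hyp_Y$ of $\tilde Y$ skewered by $h$, lifts it to a wall $W$ of $G$, observes that no nontrivial power of $h$ stabilises $W$ (since the $\tilde Y$--hyperplane stabilisers are finite), and then uses finite-dimensionality of $X$ to find among the distinct hyperplanes $\hyp_0, h^n\hyp_0, h^{2n}\hyp_0,\ldots$ of $X$ a pair that must be disjoint, yielding the skewering. Your route is more black-box: the wall-lifting comparison gives $d_X(v_0,h^kv_0)\ge d_{\tilde Y}(v_0^Y,h^kv_0^Y)$, hence $\langle h\rangle$ has unbounded orbit in $X$, and then \Cref{prop:semisimple} hands you the skewered hyperplane directly. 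The wall-lifting inequality is fine because the hyperplanes of $\tilde Y$ inject into those of $X$ (distinct $P$--orbit representatives in $\mc H_P$ give distinct $G$--cosets), so separating hyperplanes are not overcounted.

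One small advantage of your write-up is the final step. The paper invokes \Cref{L: skewering separates axes} to pass from skewering to axis separation, but that lemma is phrased for elements loxodromic in a relatively hyperbolic structure and argues via north--south dynamics on the Bowditch boundary; at this point in the argument $h$ is parabolic in $(G,\mc P)$ and the refined structure has not yet been assembled, so the citation is a little awkward. Your direct manipulation---applying $h^{-m}$ and passing to the complementary halfspace to obtain $h^{-n}H^g_+\subsetneq H^g_+\subsetneq h^nH^g_+$---sidesteps this entirely. The only point to tidy is the identification of the cubical halfspace $\hyp^+\subset X$ with the group halfspace $H^g_+\subset G$: these are different objects related by the Sageev dictionary, and the strict nesting transfers because the corresponding walls are distinct and non-crossing. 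This is standard, but worth one sentence.
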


\begin{proof}
	% [Proof version 2]
	% \thomas{Attempt at using hyperplane notation in proof.}
	
	We will show that $h$ is skewered by a hyperplane of $X$.  
	Since $h$ is a hyperbolic isometry of $\tilde{Y}$, there exists a hyperplane $\hyp_Y$
	and $n \in \naturals$ such that $h^n\hyp_Y^+ \subsetneq \hyp_Y^+$ in $\tilde{Y}$.  Recall that the hyperplanes of $\tilde{Y}$ have finite stabilizer. 
	On the other hand, $h$ has infinite order, so $h^m\notin \Stab(\hyp_Y)$ for all $m\in \Z\setminus \{0\}$. 
	
	The hyperplane $\hyp_Y$ corresponds to a wall $Z = \{ Z^-, Z^+ \}$ in $P$. 
	By construction, $Z^\pm = P \cap W^\pm$ where $W = \{W^-, W^+\}$ is a wall in $G$. 
	Thus, nontrivial powers of $h$ do not stabilize $W$.
	Moreover, the nesting above implies that $P \cap h^{nm}W^+ \subseteq (W^+ \cap h^{nm}W^+)$ for all $m \in \naturals$.  
	While $W$ and $h^nW$ may cross, non-zero powers of $h$ do not lie in $\Stab_G(W)$. Thus $W,h^nW,h^{2n}W,\ldots$ are distinct cosets in $G\walls$ which correspond to distinct hyperplanes $\hyp_0,\hyp_1,\hyp_2,\ldots$ of $X$. Then there exists $ 1 \leq k \leq \dim(X)$ such that $\hyp_0$ is disjoint from $h^{nk}\hyp_0 = \hyp_n$.  Hence, $\hyp_n^+ \subsetneq \hyp^+$, so $\hyp$ skewers $h$.
	
	Axis separation follows from \Cref{L: skewering separates axes}.
\end{proof}

\begin{proposition}\label{P: new refined elliptics}
	Let $Q$ be an infinite vertex stabilizer of $\tilde{Y}$. Then $Q$ acts elliptically on $X$. 
	% \footnote{The previous two statements say an element of $P$ is loxodromic on $X$ if and only if it is loxodromic on $Y$. The current statement can thus be removed.
		% TN: the previous statements show that every eleemnt of $Q$ must be elliptic on $X$, but not that they all have a common fixed point, which is what we need. If we remove the statement from theorem environment then we should at least point it out to the reader in a paragraph.   \suraj{If Q is not elliptic, then it has to have an unbounded orbit. Then it will have a loxodromic element by the Caprace result \cref{prop:semisimple}.} \teddy{yes. that's exactly what the proof here says. I suggest we basically say that in the point of the next proof that cites 5.8 and then we remove 5.8.}\suraj{sounds good to me. If you feel retaining the statement makes more sense, I don't mind, was just looking at shortening things if possible.}}
\end{proposition}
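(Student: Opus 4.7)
The plan is to argue by contradiction, turning a hypothetical element of $Q$ that acts hyperbolically on $X$ into a hyperbolic isometry of $\tilde{Y}$, and then exploiting the fact that $Q$ stabilizes a vertex of $\tilde{Y}$. Suppose that $Q$ does \emph{not} act elliptically on $X$; then some $Q$-orbit in $X$ is unbounded. By \Cref{claim:finite-dimensional}, $X$ is finite-dimensional, and by \Cref{T: the finer structure}, $(P, \mc{Q}_P)$ is relatively hyperbolic with $P$ finitely generated, so the peripheral $Q$ is itself finitely generated. Thus \Cref{prop:semisimple} applies and produces an element $q \in Q$ and a halfspace $\hyp_+$ of $X$ with $q\hyp_+ \subsetneq \hyp_+$; equivalently, $q$ is a hyperbolic isometry of $X$ skewering $\hyp$.

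Next, I would invoke \Cref{L: lox X implies Y}: since $q \in Q \le P$ acts hyperbolically on $X$, it must act hyperbolically on $\tilde{Y}$ as well. But $Q$ fixes the vertex $v$ of $\tilde{Y}$ by hypothesis, so $q$ fixes $v$ and therefore acts elliptically on $\tilde{Y}$. This contradicts the fact that hyperbolic CAT(0) isometries have no fixed points. Hence $Q$ must act on $X$ with bounded orbits, and the Bruhat--Tits fixed-point theorem yields a global $Q$-fixed point in $X$; that is, $Q$ acts elliptically on $X$.

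The main point that requires care is the finite generation of $Q$, since this is exactly what licenses the application of \Cref{prop:semisimple}. This is a standard feature of relatively hyperbolic pairs: $P$ is finitely generated by assumption, $(P,\mc{Q}_P)$ is relatively hyperbolic by \Cref{T: the finer structure}, and peripheral subgroups of a finitely generated relatively hyperbolic pair are finitely generated (for instance via Osin's theory, or directly from cocompactness of the $P$-action on the quasi-tree $\tilde{Y}$ combined with the fact that vertex stabilizers of a cocompact action on a simply-connected cell complex with finitely many edge orbits are finitely generated when the ambient group is). With this in hand, everything else is a short contradiction assembled from \Cref{prop:semisimple}, \Cref{L: lox X implies Y}, and the standard CAT(0) fact that bounded orbits force a global fixed point.
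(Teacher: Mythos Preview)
Your proof is correct and follows essentially the same route as the paper: assume $Q$ is not elliptic on $X$, use finite-dimensionality and \Cref{prop:semisimple} to extract a $q\in Q$ acting hyperbolically on $X$, then apply \Cref{L: lox X implies Y} to get a hyperbolic isometry of $\tilde{Y}$ that fixes a vertex, a contradiction. Your version is in fact more careful than the paper's, since you explicitly verify the finite-generation hypothesis of \Cref{prop:semisimple} for $Q$ (which the paper's proof leaves implicit) and spell out the Bruhat--Tits step from bounded orbits to a global fixed point.
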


\begin{proof}
	Suppose toward a contradiction that $Q$ is not elliptic on $X$.  
	Since $X$ is finite dimensional by \Cref{claim:finite-dimensional}, \Cref{prop:semisimple} guarantees that some element $q \in Q$ acts as a hyperbolic isometry of $X$. 
	By \Cref{L: lox X implies Y}, $q$ must also act as a hyperbolic isometry on $\tilde{Y}$.
	However, hyperbolic isometries do not stabilize vertices of $\tilde{Y}$, so we obtain a contradiction.
\end{proof}

\begin{proof}[Proof of \Cref{T: refined action}]
	Take $\mc{Q}_P$ to be a finite collection of $P$--conjugacy representatives for infinite vertex stabilizers of $\tilde{Y}$ as above. 
	If no such stabilizers exist then $P$ acts geometrically on $\tilde{Y}$ and is virtually free.
	As a set, the Bowditch boundary of $(P,\mc{Q}_P)$ is equal to the disjoint union of $\partial{Y}$, the visual boundary of $\tilde{Y}$ and the vertices of $\tilde{Y}$ with infinite stabilizer, and the parabolics are infinite stabilizers of vertices in $\tilde{Y}$, see \cite[Section 9]{BowditchRH} for details. Thus any $h\in P$ which is loxodromic in $(P,\mc{Q}_P)$ must also act by a hyperbolic isometry on $\tilde{Y}$. Then \Cref{P: new refined loxo} implies that $h$ satisfies the axis separation condition for $\walls$. 
	If $Q$ is an infinite vertex stabilizer, then \Cref{P: new refined elliptics} implies $Q$ acts elliptically on $X$. 
\end{proof}

If the peripherals were one-ended, then there was no need for refinement because the peripherals already act elliptically. In the following example, we show that the refinement given by \Cref{T: refined action} may produce peripherals that are not one-ended: 

\begin{example}\label{E: multi ended refined peripherals}
	Suppose that $A, B, C$ are finitely generated groups. 
	Let $G = A * B * C$.  
	Observe that $\mc{P} = \{ A*B, C\}$ gives a peripheral structure for $G$. 
	Take $K_1 = K_2$ to be the trivial subgroup. 
	Codimension-1 subgroups can have multiple choices of halfspaces.  
	Choose halfspaces (in $G$) of $K_1$ such that $K_1^+$ is the set of elements of $G$ whose normal form has first syllable in $A$.
	Choose halfspaces of $K_2$ such that $K_2^+$ is the set of elements of $G$ whose normal form has first syllable in $A * B$
	%Each of $A$, $B$, and $C$ are not split by 
	By construction the cube complex constructed from translates of these choices of halfspaces has vertex stabilizers conjugate to one of $A$, $B$, or $C$, so $\mc{Q} = \{A, B, C\}$ would be the refined peripheral structure given by \Cref{T: refined action}.

	This example does not detect whether any of $A,B,C$ are one-ended or split over finite subgroups nor if they are finitely presented.
	Also note that taking $K_1 = K_2$ was for convenience.  One could consider an amalgam of $A*B$ with $C$ over a finite edge group to construct a similar example where $K_1,K_2$ are distinct.
\end{example}

\section{The proof of \texorpdfstring{\cref{thm: boundary multiended}}{Theorem 1.3}}

We are now ready to prove \Cref{thm: boundary multiended} according to the following scheme.
{We will refine our peripheral collection $\mc{P}$ to a collection $\mc{Q}$ using \Cref{T: refined action} such that each element in the collection acts elliptically on $X$, obtaining a cocompact action by $G$. We will then show that every infinite cube stabilizer is parabolic. Then every infinite cell stabilizer will be finite index in a maximal parabolic subgroup, completing the proof.}
We recall the statement here for convenience.
\mainthm*

\begin{proof}[Proof of \Cref{thm: boundary multiended}]
	Given hypothesis (\ref{item:separation}) of the theorem, the same proof as that of \cite[Theorem 5.1]{BergeronWise} gives a finite subcollection \[
	\mathcal{H}_{\mathrm{0}} = \{ H_1, \dots H_k \} \subseteq \mathcal{H},
	\] 
	such that every infinite order hyperbolic element of $(G,\mathcal{P})$ acts loxodromically on the dual CAT(0) cube complex $X$.

	%If each $P \in \mathcal{P}$ acts elliptically on $X$, then we are done by \cref{lem: elliptic peripherals rel geom}. This holds for example, if the peripherals are one-ended \cite[Proposition~2.4]{RelCannon}. 
	% Suppose that some $P$ is not elliptic and therefore has unbounded orbits in $X$. 
	
	% \textbf{Refined peripheral structure with elliptic parabolics:}\TODO{See note below. These look funny and I don't think they're that helpful. If we insist on keeping them, they should be made subsections or subsubsections --TE. S: I agree, I had put them there to help me understand the proof. Don't need them anymore.}

	At this point, if each $P\in\mc{P}$ acts elliptically on $X$, then we can proceed without refining the peripherals. 
	By \Cref{T: refined action}, for each $P\in\mc{P}$ that does \emph{not} act elliptically on $X$, there is a relatively hyperbolic structure $(P,\mc{Q}_P)$ so that each $Q\in \mc{Q}_P$ acts elliptically on $X$ and every $g\in P$ that is hyperbolic in $(P,\mc{Q}_P)$ acts loxodromically on $X$. 
	We make a new relatively hyperbolic structure $(G,\mc{Q})$ by replacing every $P$ that does not act elliptically by the elements of $\mc{Q}_P$. Note that \Cref{T: refined action} now ensures that every maximal parabolic acts elliptically. 
	\Cref{L: cocompact follows} now implies that the action of $G$ on $X$ is cocompact. 
	
	Every hyperbolic element $g$ of $(G,\mc{Q})$ is either hyperbolic in $(G,\mc{P})$ or is hyperbolic in $(P,\mc{Q}_P)$ for some $P\in\mc{P}$. In the first case, $g$ acts loxodromically on $X$ by \cite[Theorem 5.1]{BergeronWise} as noted above. 
	In the second case, \Cref{T: refined action} implies $g$ acts loxodromically on $X$. 
	\Cref{lem:stabilizer} now implies that every infinite elliptic subgroup for the action of $G$ on $X$ must be parabolic in $(G,\mc{Q})$. 
	Then, \Cref{P:stabilizers commensurable max parabolic} implies that every infinite cell stabilizer is finite index in a maximal parabolic subgroup. 
	
	Therefore, the action of $(G,\mc{Q})$ on $X$ is relatively geometric. 
\end{proof}

\bibliography{cubes}

\begin{thebibliography}{CCFI16}

\bibitem[Ago13]{AgolVirtualHaken}
Ian Agol.
\newblock The {V}irtual {H}aken {C}onjecture.
\newblock {\em Doc. Math.}, 18:1045--1087, 2013.
\newblock With an appendix by Agol, Daniel Groves, and Jason Manning.

\bibitem[Bow12]{BowditchRH}
Brian~H. Bowditch.
\newblock Relatively hyperbolic groups.
\newblock {\em Internat. J. Algebra Comput.}, 22(3):1250016, 66, 2012.

\bibitem[BW12]{BergeronWise}
Nicolas Bergeron and Daniel~T. Wise.
\newblock A boundary criterion for cubulation.
\newblock {\em Amer. J. Math.}, 134(3):843--859, 2012.

\bibitem[CCFI16]{CapraceChatterjiFernosIozzi}
Pierre-Emmanuel Caprace, Indira Chatterji, Talia Fernos, and Alessandra Iozzi.
\newblock The median class and superrigidity of actions on cat (0) cube
  complexes. with an appendix by pierre-emmanuel caprace.
\newblock {\em Journal of Topology}, 9:349--400, 2016.

\bibitem[CS11]{CapraceSageev}
Pierre-Emmanuel Caprace and Michah Sageev.
\newblock Rank rigidity for cat (0) cube complexes.
\newblock {\em Geometric and functional analysis}, 21(4):851--891, 2011.

\bibitem[DM22]{DahmaniKrishnaMS}
François {D}ahmani and Suraj~Krishna {M S}.
\newblock Cubulating a free-product-by-cyclic group.
\newblock {\em arXiv:2212.09869. To appear in Alg. Geom. Topol.}, 2022.

\bibitem[DMM]{hypbycycliccube}
François {D}ahmani, Suraj~Krishna {M S}, and Jean~Pierre {M}utanguha.
\newblock Hyperbolic hyperbolic-by-cyclic groups are cubulable.
\newblock {\em arXiv:2306.15054. To appear in Geom. Topol.}

\bibitem[Dun85]{dunwoody}
Martin~J Dunwoody.
\newblock The accessibility of finitely presented groups.
\newblock {\em Inventiones mathematicae}, 81:449--457, 1985.

\bibitem[EG20a]{RelCannon}
Eduard Einstein and Daniel Groves.
\newblock Relative cubulations and groups with a 2-sphere boundary.
\newblock {\em Compos. Math.}, 156(4):862--867, 2020.

\bibitem[EG20b]{RelGeom}
Eduard Einstein and Daniel Groves.
\newblock Relatively geometric actions on {CAT(0)} cube complexes, 2020.
\newblock \href{http://arxiv.org/abs/2010.14441}{\texttt{arXiv:2010.14441}}, to
  appear J. Lond. Math. Soc. (accepted 9/23/21).

\bibitem[EN21]{C16:relCube}
Eduard Einstein and Thomas Ng.
\newblock Relative cubulation of small cancellation free products.
\newblock 2021.
\newblock \href{http://arxiv.org/abs/2111.03008}{\texttt{arXiv:2111.03008}}.

\bibitem[GM23]{grovesmanningimproper}
Daniel Groves and Jason~Fox Manning.
\newblock Hyperbolic groups acting improperly.
\newblock {\em Geom. Topol.}, 27(9):3387--3460, 2023.

\bibitem[Hru10]{Hruska2010}
G.~Christopher Hruska.
\newblock Relative hyperbolicity and relative quasiconvexity for countable
  groups.
\newblock {\em Algebr. Geom. Topol.}, 10(3):1807--1856, 2010.

\bibitem[HW08]{HW08}
Fr{\'e}d{\'e}ric Haglund and Daniel~T. Wise.
\newblock Special cube complexes.
\newblock {\em Geometric and Functional Analysis}, 17(5):1551--1620, 2008.

\bibitem[HW09]{HruskaWise:packing}
G~Christopher Hruska and Daniel~T Wise.
\newblock Packing subgroups in relatively hyperbolic groups.
\newblock {\em Geometry \& Topology}, 13(4):1945--1988, 2009.

\bibitem[HW14]{HruskaWise14}
G.~Christopher Hruska and Daniel~T. Wise.
\newblock Finiteness properties of cubulated groups.
\newblock {\em Compos. Math.}, 150(3):453--506, 2014.

\bibitem[HW15]{HW2015}
Tim Hsu and Daniel~T. Wise.
\newblock Cubulating malnormal amalgams.
\newblock {\em Invent. Math.}, 199(2):293--331, 2015.

\bibitem[KS19]{KarSageev-UEG}
Aditi Kar and Michah Sageev.
\newblock Uniform exponential growth for cat (0) square complexes.
\newblock {\em Algebraic \& Geometric Topology}, 19(3):1229--1245, 2019.

\bibitem[Man05]{manning_bottleneck}
Jason~Fox Manning.
\newblock Geometry of pseudocharacters.
\newblock {\em Geom. Topol.}, 9:1147--1185, 2005.

\bibitem[PW18]{PiotrWise18}
Piotr Przytycki and Daniel~T. Wise.
\newblock Mixed 3-manifolds are virtually special.
\newblock {\em J. Amer. Math. Soc.}, 31(2):319--347, 2018.

\bibitem[Sag95]{Sageev95}
Michah Sageev.
\newblock Ends of group pairs and non-positively curved cube complexes.
\newblock {\em Proc. London Math. Soc. (3)}, 71(3):585--617, 1995.

\bibitem[Sta68]{StallingsEndsTF}
John~R. Stallings.
\newblock On torsion-free groups with infinitely many ends.
\newblock {\em Ann. of Math. (2)}, 88:312--334, 1968.

\bibitem[Sta71]{StallingsBookYale}
John Stallings.
\newblock {\em Group theory and three-dimensional manifolds}, volume~4 of {\em
  Yale Mathematical Monographs}.
\newblock Yale University Press, New Haven, Conn.-London, 1971.
\newblock A James K. Whittemore Lecture in Mathematics given at Yale
  University, 1969.

\bibitem[SW15]{SW2015}
Michah Sageev and Daniel~T. Wise.
\newblock Cores for quasiconvex actions.
\newblock {\em Proc. Amer. Math. Soc.}, 143(7):2731--2741, 2015.

\bibitem[Wis21]{WiseManuscript}
Daniel~T. Wise.
\newblock {\em The structure of groups with a quasiconvex hierarchy}, volume
  209 of {\em Annals of Mathematics Studies}.
\newblock Princeton University Press, Princeton, NJ, [2021] \copyright 2021.

\bibitem[Yan14]{WenyuanYang2014}
Wen-yuan Yang.
\newblock Peripheral structures of relatively hyperbolic groups.
\newblock {\em J. Reine Angew. Math.}, 689:101--135, 2014.

\end{thebibliography}
\bibliographystyle{alpha}
\end{document}